\newtheorem{theorem}{Theorem}
\newtheorem{lemma}{Lemma}
\newtheorem{definition}{Definition}
\newtheorem{problem}{Problem}
\newtheorem{assumption}{Assumption}
\newcommand{\be}{\begin{equation}}
\newcommand{\ee}{\end{equation}}
\newcommand{\beq}{\begin{eqnarray}}
\newcommand{\eeq}{\end{eqnarray}}
\newcommand{\ced}{\end{proof}}
\def\first {\vtop{\baselineskip=11pt
		\hbox to 125truept{\hss Yuchao Dong\hss\footnote{  } }}}
\def\second {\vtop{\baselineskip=11pt
		\hbox to 125truept{\hss Qinxin Meng\hss\footnote{  } }}}
\begin{document}
	\title{cument}
\title {Second-Order Necessary Conditions for Optimal Control with Recursive Utilities
\thanks{The first auther gratefully acknowledges finincial support from R\'egion Pays de la Loire throught the grant PANORisk. The second auther was supported by the Natural Science Foundation of Zhejiang Province
for Distinguished Young Scholar  (No.LR15A010001) and the National Natural Science Foundation of China (No.11471079, 11301177)}
}
\date{}

\author[a,b]{Yuchao Dong}
\author[c]{Qingxin Meng\footnote{Corresponding author.
\authorcr
\indent E-mail address: ycdong@fudan.edu.cn(Y. Dong), mqx@hutc.zj.cn(Q. Meng))}}

\affil[a]{\small{School of Mathematical Sciences, Fudan University, Shanghai 200433, China}}
\affil[b]{\small{LAREMA, D\'epartment de Math\'ematiques, Universit\'e d'Angers,2 Bd Lavoisier-49045, ANGERS CEDEX 01}}

\affil[c]{\small{Department of Mathematical Sciences, Huzhou University, Zhejiang 313000, China}}

\maketitle

\begin{abstract}
\noindent
The necessary conditions for  an optimal control of a stochastic control problem with recursive utilities is investigated. The first order condition is the  the well-known Pontryagin type maximum principle. When the optimal control satisfying such first-order necessary condition is singular in some sense, certain type of the second-order necessary condition will come in naturally. The aim of this paper is to explore such kind of conditions for our optimal control problem.

\end{abstract}

\textbf{Keywords}: ; Recursive Optimal Control; Maximum Principle; Variation Equation; Adjoint Processes

\section{Introduction}

Consider a finite time horizon $T$. Let $(\Omega, {\mathscr F}, P)$ be a complete probability space and $W$  a $d$-dimensional standard
Brownian motion defined on this space. The filtration $\{{\mathscr F}_t\}_{0\leq t\leq T}$
is the natural filtraion generated by $W$ (augmented by all the $P$-null sets) that satisfies the usual condition. In this paper, we
consider the controlled system satisfying  the following stochastic differential equation (SDE for short) driven by
Brownian motion $\{W(t), 0\leq t\leq T\}.$:
\begin{equation}
\label{eq:1.1}
x(t)=x_0+\int_0^tb(s,x(s),u(s))ds+\int_0^t\sigma(s,x(s))dW_s.
\end{equation}
The associated cost functional is defined via the sulotion of a backward differential stochastic equation (BSDE for short):
\begin{equation}
\label{eq:1.2}
y(t)=h(x(T))+\int_t^Tf(s,x(s),y(s),
z(s),u(s))ds-\int_t^Tz(s)dW_s
\end{equation}
and given as
 \begin{equation}\label{eq:1.3}
   J(u(\cdot)):=y(0).
 \end{equation}
 In the context of mathematical finance, such functionals are sometimes called recursive utilities. We also call the solution $(y(\cdot),z(\cdot))$ of (\ref{eq:1.2}) the cost process associated with $u(\cdot)$. In the above system, $
b:[0,T]\times \mathbb R^n \times \bar U \rightarrow \mathbb R^n,
\sigma: [0,T] \times \mathbb R^n\rightarrow \mathbb R^{n\times d},
f: [0,T] \times \mathbb R^n \times \mathbb R \times \mathbb R^d \times \bar U \rightarrow \mathbb R,
h: \mathbb R^n \rightarrow \mathbb R$
are given fucntions with $U$ being  the control domain, that is assumed to be
a nonempty subset of ${\mathbb R}^m$ and not
necessarily to be convex, and  $\bar U$
its closure.  An admissible
control is defined as follows.
\begin{definition}
A control process $u (\cdot)$ is said to be admissible if it is an $U$-valued predictable process and satisfies
\begin{eqnarray*}
||u (\cdot)||_{{\cal U}_{\mathrm{ad}}} \triangleq \sup_{0 \leq t\leq T} \big\{\mathbb{E}
\big [ \left\vert u(t)\right\vert^{8} \big ]\big\}^{\frac{1}{8}}<\infty.
\end{eqnarray*}
Denote by ${\cal U}_{\mathrm{ad}}$ the set of all admissible control processes.
\end{definition}
The optimal control problem is to minimize the cost over ${{\cal U}_{\mathrm{ad}}}$, i.e.,
\begin{problem}\label{pro:2.1}
Find an admissible control $\bar u (\cdot)\in {\cal U}_{ad}$ such that
\begin{eqnarray}
J ( { \bar u} (\cdot) ) = \inf_{u (\cdot) \in {\cal U}_{ad}} J ( u (\cdot) )
\end{eqnarray}
subject to the state equation \eqref{eq:1.1}, \eqref{eq:1.2} and the cost functional (\ref{eq:1.3}).
\end{problem}

The process ${ \bar u} (\cdot)$ is called an optimal control.
The state  and cost processes  associated with ${\bar u} (\cdot)$, denoted by
$({\bar x}(\cdot), {\bar y}(\cdot),  {\bar z}(\cdot) )$, are called the optimal state  and cost processes.\\

 One tool  for the study of optimal control problems is the Pontryagin maximum principle which is to derive necessary conditions for the optimal pair. Before analyzing this issue in details, let us make some rough observations. Suppose $({\bar x}(\cdot), {\bar y}(\cdot),  {\bar z}(\cdot) )$ is an optimal pair of Problem \ref{pro:2.1}. For any given
 $u(\cdot)\in  {\cal
 	U}_{ad}$, let $u^\delta(\cdot)\in {\cal
 U}_{ad}$ be a suitable perturbation of $u(\cdot)$ determined by $u(\cdot)$ with a parameter $\delta$ (for examples, a convex type perturbation, or a spike type variation), so that $\rho(u^\delta(\cdot), \bar u(\cdot) )
 =O(\delta)$ with $\rho$ being a suitable metric on the set ${\cal U}_{ad}$, and the following holds:
 \begin{eqnarray}\label{eq:1.5}
   J(u^\rho(\cdot))=J(\bar u(\cdot))+\delta J_1(\bar u(\cdot),u(\cdot))+o(\delta).
 \end{eqnarray}
 Here $J_1(\bar u(\cdot),u(\cdot))$
 is some functional of $u(\cdot)$
 and $\bar u(\cdot).$
 The above can be called the first-order Taylor expansion of $J(\cdot)$ at $\bar u(\cdot)$, and $J_1(\bar u(\cdot),u(\cdot))$ can be regarded as the ''directional derivative'' of $J(\cdot)$ at $\bar u(\cdot)$ in the ''direction'' $u(\cdot)$. Hence, the minimality of $\bar u(\cdot)$ implies
 \begin{eqnarray}
   J_1(\bar u(\cdot),u(\cdot))\geq 0,
   ~~\forall u(\cdot)\in U.
 \end{eqnarray}
 Such a condition can be tranformed into the condition on the Hamiltonian (see  (\ref{Hamiltonian}) for the definition). It is called the first-order necessary condition for $\bar u(\cdot)$, which is essentially the Pontryagin's maximum principle. Sometimes, such a condition is sufficient to find the optimal control, for example, when there is only one control satisfies the condition. In other cases, the first order condition is insufficient especially when  the optimal control is singular. More precisely, suppose that there is a set
  ${\cal U}_0\subset {\cal U}_{\mathrm{ad}}$, which is different from the singleton, such that the following holds:
  \begin{eqnarray}
    J_1(\bar u(\cdot),u(\cdot))= 0, \forall u(\cdot)\in  {\cal U}_0.
  \end{eqnarray}
 Then $\bar u(\cdot)$ is said to be singular on the set ${\cal U}_0.$
 For convenience, we call ${\cal U}_0$ a singular set of $\bar u(\cdot)$. Let

 $${\cal U}_0(\bar u(\cdot))=\bigg \{u(\cdot)\in {\cal U}_{ad}|
    J_1(\bar u(\cdot),u(\cdot))= 0\bigg\},  $$
 which is called the maximum singular set of $\bar u(\cdot).$
 When ${\cal U}_0= {\cal U}_{\mathrm{ad}},$
we say that $\bar u(\cdot)$ is fully singular (or simply singular); When
 ${\cal U}_0(\bar u(\cdot))=\{\bar u(\cdot)\},$ we say that $\bar u(\cdot)$ is nonsingular; And, more interestingly, when  ${\cal U}_{\mathrm{ad}}\neq{\cal U}_0(\bar u(\cdot))\neq\{\bar u(\cdot)\},$  we say that
  $\bar u(\cdot)$
  is partially singular. The notion of singular control was introduced by Gabasov-Kirillova in \cite{GK}, where partial singularity was called ''the singularity in the sense of Pontryagin's maximum principle'', and full singularity was called ''the singularity in the classical sense''. We prefer to use the shorter names introduced by \cite{Lou}. Now, suppose
  $\bar u(\cdot)$ is partially singular. Then one should expect that the following (comparing with \eqref{eq:1.5})
 \begin{eqnarray}
   J(u^\rho(\cdot))=J(\bar u(\cdot))+\delta ^2J_2(\bar u(\cdot),u(\cdot))+o(\delta),  \forall u(\cdot)\in  {\cal U}_0,
 \end{eqnarray}
 for some functional $J_2(\bar u(\cdot),u(\cdot))$ of $(u(\cdot),\bar u(\cdot)).$ The above can be called the second-order Taylor expansion of $J(\cdot)$ at $u(\cdot)$ in the direction of $\bar u(\cdot)$, and $J_2(\bar u(\cdot),u(\cdot))$ can be regarded as the ¡°second order directional derivative¡± at $\bar u(\cdot)$ in the ¡°direction¡± $u(\cdot)$. Then the minimality of $\bar u(\cdot)$ leads to the following:

  \begin{eqnarray}
   J_2(\bar u(\cdot),u(\cdot))\geq 0,
   ~~\forall u(\cdot)\in {\cal U}_0,
    \end{eqnarray}
The purpose of this paper is to establish first and second order necessary optimality conditions for Problem
 \ref{pro:2.1} with recursive utilities. We shall calculate $J_2$ and transform the above condition into conditions on the Hamiltonian. It turns out to be a second order condition in some sense. \\

 Before we introduce the main results, let us first review the history on this
 topic.  When $f$ is
 independent of $(y,z),$
 it is easy to check that
 $y(0)=\mathbb E\bigg[h(x(T))+\int_t^Tf(s,x(s),u(s))ds\bigg]$ and then Problem
\ref{pro:2.1} becomes the
classical optimal control problem. We refer to
\cite{Ku} for an early study on the first-order necessary condition for stochastic optimal controls. After that, many authors contributed on this topic, see \cite{Be,Bis,Ha} and references cited therein. Compared to the deterministic setting, new phenomenon and difficulties appear when the diffusion term of the stochastic control system contains the control variable and the control region is nonconvex. The corresponding first-order necessary condition for this general case was established in \cite{Pe}.
For the recursive stochastic
optimal control problem, when the control
domain $U$ is convex, the local first-order maximum principle was studied
in \cite{Do99,Ji06,Pe93} (see also   \cite{Sh06,Wu98,Xu95}  and the references therein).
But for the general setting, it remained to be an open problem proposed  by Peng
\cite{Pe98} in a long time. By regarding $z(\cdot)$
as a control process and the
terminal condition $y(T)=h(x(T))$ as
a constraint and then using
the Ekeland variational principle,
Wu \cite{Wu13}
and Yong \cite{Yo10}
 established the corresponding first-order maximum
principles, but
contained unknown parameters in the formulation for the maximum principle.
 Recently, different from their methods, Hu \cite{Hu17}  completely solved this problem by establishing the variation equation for backward stochastic differential equations.\\

 As we see in the previous, for the singular control, it may happen that the first-order necessary conditions turn out to be trivial. Either the gradient and the Hessian of the corresponding Hamiltonian with respect to the control variable vanish/degenerate or  the Hamiltonian is equal to a constant in the control region. In these cases, the first-order necessary condition cannot provide enough information for the theoretical analysis and numerical computing, and therefore one needs to study the second-order necessary conditions.
Along the line of necessary conditions for singular optimal control problems, the deterministic case was considered by many authors. The reader is referred to Bell and Jacobson \cite{Be75}, the review paper by Gabasov and Kirillova \cite{GK} (and the references therein) for relevant results, Kazemi-Dehkordi \cite{Ka84}
and  Krener \cite{Kr77}. Compared to the deterministic control systems,  second-order necessary condition for stochastic optimal controls was first investigated by Tang \cite{Ta10}. In
\cite{Ta10}, a pointwise second-order maximum principle for stochastic singular optimal controls in the sense of Pontryagin-type maximum principle was established which involves second-order adjoint processes, for the case that the diffusion term $ \sigma(t, x, u)$ is independent of the control $u$, via a generalized spike variation technique together with the vector-valued measure theory and the second-order expansions of both the system and the cost functional. Recently, this direction has drawn great attention, see \cite{Bo12,Fr17,Zh15,Zh17}. In \cite{Bo12}, an integral-type second-order necessary condition for stochastic optimal controls was derived under the assumption that the control region $U$ is convex.
While in \cite{Zh15}, a pointwise second-order necessary condition for stochastic optimal controls is established in the case that  both drift and diffusion terms may contain the control variable $u$, and
the control region $U$ is still assumed to be convex.  The method was further developed in \cite{Zh15} to obtain a pointwise second-order necessary condition in general cases where the control region is allowed to be nonconvex, but the analysis there is much more complicated, see also \cite{Fr17} and \cite{Zh17} for details.\\

This paper is first to investigate
the second-order maximum principle for
the recursive optimal control problem.
We established a pointwise second-order condition in the sense of Pontryagin-type maximum principle with a nonconvex control region when the diffusion term is independent of the control $u$. Via a generalized spike variation technique together with the vector-valued measure theory,  we gave the second-order expansions of both the system and the cost functional and the second-order dual process  which are of interest themselves. Finally, the analysis leads to the main results that contains the result of \cite{Ta10}. The rest of this paper is organized as follows. In Section 2, we introduce the formulation of the optimal control problem and  give the main results of this paper. Section 3 includes a quantitative analysis for the variations of the system and the cost between two different control actions. Section 4 contains the proof for the necassary condtions both of the first and second order. Section 5 provides some examples.

\section{Formulation of the Problem and the Main Results}
\subsection{Notations}
We consider a finite time horizon $T$ and a complete probability space $(\Omega, {\cal F}, {\mathbb P})$
carrying a $d$-dimensional standard Brownian motion $W (\cdot) := \{ W (t) | t \in [0, T] \}$. Without loss of generality, we assume that $d=1$ for simplicity of the presentation. Let ${\mathbb F} : = \{ {\cal F}_t | t \in [0, T] \}$ be a filtration generated by $W (\cdot)$
and satisfying the usual conditions of right-continuity and ${\mathbb P}$-completeness. We denote by $\mathcal {P}$
the predictable $\sigma$-field on $[0, T] \times \Omega$, and $\mathcal {B} (\Lambda)$
the Borel $\sigma$-algebra of any topological space $\Lambda$. Let $\mathbb H$ be an Euclidean space,
in which the inner product and the norm is denoted by $\left < \cdot, \cdot
\right >$ and $| \cdot |$, respectively. We denote the points in $\mathbb H$ as a column vector. Given a matrix $A \in \mathbb R^{n \times n}$ and $x \in \mathbb R^n$, we denote by $A(x)^2:=\left <Ax,x\right>$. For a function $\phi: {\mathbb R}^n \rightarrow {\mathbb R}$,
we use $\phi_x$ to denote its gradient and $\phi_{xx}$ its Hessian (a symmetric matrix). If
$\phi: {\mathbb R}^n \rightarrow {\mathbb R}^k$, where $k \geq 2$,
then $\phi_x = [ \frac{\partial \phi_i}{\partial x_j} ]_{i = 1, 2, \cdots, k; j = 1, 2, \cdots, n}$
is the corresponding $(k \times n)$-Jacobian matrix. Furthermore, we denote by  $A^*$  the transpose
of any vector or matrix $A$, and $C$ and $K$ two generic positive constants, which may be different from line to line.\\

Several spaces of random variables and stochastic processes on $( \Omega, {\cal F}, \mathbb {P} )$ will be
used throughout the paper. For any $\alpha, \beta \in [1,\infty)$, we define

\begin{itemize}

\item $L_{\mathbb {F}}^\beta ( 0, T; \mathbb {H})$: the space of all
$\mathbb {H}$-valued and $\mathbb {F}$-adapted processes $f (\cdot) = \{ f ( t, \omega ) | ( t, \omega )
\in [ 0, T ] \times \Omega \}$ such that $\| f (\cdot) \|_{ L_{\mathbb {F}}^\beta ( 0, T; {\mathbb H} )} \triangleq
\left \{ {\mathbb E} \left [ \int_0^T | f (t) |^\beta d t \right ] \right \}^{\frac{1}{\beta}} < \infty$;

\item $S_{\mathbb {F}}^\beta ( 0, T; \mathbb {H} )$: the space of all $\mathbb {H}$-valued, $\mathbb {F}$-adapted,
c\`adl\`ag processes $f (\cdot) = \{ f ( t, \omega ) | ( t, \omega ) \in [ 0, T ] \times \Omega \}$ such that
$\| f (\cdot) \|_{ S_{\mathbb {F}}^\beta ( 0, T; {\mathbb H} )} \triangleq
\left \{ {\mathbb E} \left [ \sup_{ t \in [0, T] } | f (t) |^\beta \right ] \right \}^{\frac{1}{\beta}} < \infty$;

\item $L^\beta_{{\cal F}_T} ( \Omega; \mathbb {H})$: the space of all $\mathbb {H}$-valued, ${\cal F}_T$-measurable
random variables $\xi$ on $( \Omega, {\cal F}, \mathbb {P})$ such that $\| \xi \|_{L^\beta_{{\cal F}_T} ( \Omega; \mathbb {H})} \triangleq \left \{ {\mathbb E} \left [ | \xi |^\beta \right ] \right \}^{\frac{1}{\beta}} < \infty$;

\item $L_{\mathbb {F}}^\beta ( 0, T; L^\alpha ( 0, T; {\mathbb H}))$: the space of all
$L^\alpha ( 0, T; {\mathbb H})$-valued, ${\mathbb {F}}$-adapted processes $f (\cdot) =
\{ f ( t, \omega ) | (t,\omega) \in [ 0, T ] \times \Omega \}$ such that
$\| f (\cdot) \|_{ L_{\mathbb {F}}^\beta ( 0, T; L^\alpha ( 0, T; {\mathbb H}))}
\triangleq \left \{ {\mathbb E} \left [ \left ( \int_0^T | f (t) |^\alpha d t \right )^{\frac{\beta}{\alpha}} \right ]
\right \}^{\frac{1}{\beta}} < \infty$.

\end{itemize}
In addition, we write $M^p_{\mathbb {F}} [ 0, T ] \triangleq S_{\mathbb {F}}^p ( 0, T; \mathbb {R}^n )
\times S_{\mathbb {F}}^p ( 0, T; \mathbb {R} ) \times L_{\mathbb {F}}^p ( 0, T; L^2 (0, T; \mathbb {R}^{ d}) )$.
Clearly, $M^p_{\mathbb F}[0,T]$ is a Banach space. For any triplet of processes $\Theta (\cdot) \triangleq ( x (\cdot), y (\cdot), z (\cdot) )$
in $M^p_{\mathbb {F}} [0,T]$, the corresponding norm is defined as
\begin{eqnarray*}
\| \Theta (\cdot) \|_{ M^p_{\mathbb {F}} [ 0, T ] }
\triangleq \left \{ {\mathbb E} \left[ \sup_{ t \in [ 0, T ] } |x(t)|^p
+ \sup_{ t \in [ 0, T ] } | y (t) |^p + \left ( \int_0^T | z (t) |^2 d t \right)^{\frac{p}{2}} \right ] \right \}^{\frac{1}{p}} .
\end{eqnarray*}

\subsection{Basic Assumptions}
 In this subsection, we introduce
 some basic assumptions on the coefficients of our control problem.
Let $K_0$ be some positive constant.
\begin{assumption}\label{ass:1.1}
	The functions $b,\sigma, h, f$ are Borel measurable with respect to their respective arguments, continuous in $u$,
continuously differentiable in $(x,y,z)$ for each fixed $(t, u)$, and
 \begin{equation}
	\label{assumption1 SDE}
	\begin{split}	&|b_x(t,x,u)|,|\sigma_x(t,x)|,|h_x(x)|,
|f_x(t,x,y,z,u)|,|f_y(t,x,y,z,u)|,
|f_z(t,x,y,z,u)| \le K_0,\\
	&|b(t,x,u)| \le K_0(1+|x|+|u|),|\sigma(x,u)|\le K_0(1+|x|+|u|),|h(x)| \le K_0(1+|x|),\\&|f(t,x,y,z,u)|\le K_0(1+|x|+|y|+|z|+|u|).
	\end{split}
	\end{equation}
Moreover, all the derivatives involved above are Borel measurable, and are continuous in $x$.

\end{assumption}
\begin{assumption}\label{ass:1.2}
 The first-order derivatives involved above are continuous in $u$ on $\bar U$. The functions $ b, \sigma, f$ and $h$ have continuous second-order derivatives in $x$. The second-order derivatives are Borel measurable with respect to $(t, x, y,z, u)$, and are bounded by the constant $K_0$, that is
\begin{equation}
\label{assumption2 BSDE}
\begin{split}
|b_{xx}(t,x,u)|
,|\sigma_{xx}(t,x)|,|f_{xx}(t,x,y,z,u)|,
|f_{xx}(t,x,y,z,u)|,|f_{xy}(t,x,y,z,u)|,
|h_{yy}(x)|\le K_0.
\end{split}
\end{equation}
\end{assumption}

 For each $u(\cdot) \in U_{ad}$, the SDE \eqref{eq:1.1} and BSDE \eqref{eq:1.2}, under the Assumption \ref{ass:1.1}, have a unique strong solution, which will be denoted by $(x(\cdot;u(\cdot)),y(\cdot;u(\cdot)), z(\cdot;u(\cdot)))
 \in M^8_{\mathbb {F}} [ 0, T ] \triangleq S_{\mathbb {F}}^8 ( 0, T; \mathbb {R}^n )
\times S_{\mathbb {F}}^8 ( 0, T; \mathbb {R} ) \times L_{\mathbb {F}}^8 ( 0, T; L^2 (0, T; \mathbb {R}^{d}) ),
$ or simply $(x(\cdot),y(\cdot),z(\cdot))$ if its dependence on the admissible control $u(\cdot)$ is clear from the context.\\

For future purposes, we recall the standard estmates of BSDEs (see \cite{Hu17} and the refereneces therein).
\begin{lemma}
	\label{lemma standard esimate BSDE}
	Let $(Y_i,Z_i),i=1,2$, be the solutions of the following BSDEs:
	\[Y_i(t)=\xi_i+\int_t^Tf_i(s,Y_i(s),Z_i(s))ds-\int_t^TZ_i(s)dW_s,\]
	where $E[|\xi_i|^{\beta}]<\infty,f_i=f_i(s,\omega,y,z):[0,T] \times \Omega \times \mathbb R\times \mathbb R^d \rightarrow \mathbb R$ is progressively measurable for each fixed $(y,z)$, Lipschitz in $(y,z)$, and $E[(\int_0^T|f_i(s,0,0)|ds)^\beta]<\infty$ for some $\beta >1$. Then there exists a constant $C_{\beta}$ depending on $\beta, T$ and the Lipschitz constant such that
	\begin{equation*}
	\begin{split}
	&\mathbb E\bigg[\sup_{t \in [0,T]}|Y_1(t)-Y_2(t)|^\beta
+\bigg(\int_0^T|Z_1(s)-Z_2(s)|^2ds
\bigg)^{\beta/2}\bigg]\\
	\le &C_{\beta}\mathbb E\bigg[|\xi_1-\xi_2|^{\beta}
+\bigg(\int_0^T|f_1(s,Y_1(s),Z_1(s))
-f_2(s,Y_1(s),Z_1(s))|ds\bigg)^\beta\bigg].
	\end{split}
	\end{equation*}
In particular, taking $\xi_1=0$ and $f_1=0$, we have
\[\mathbb E\bigg[\sup_{t \in [0,T]}|Y_2(t)|^\beta+(\int_0^T|Z_2(s)|^2ds)^{\beta/2}]\le
C_{\beta}\mathbb E\bigg[|\xi_2|^{\beta}
+(\int_0^T|f_2(s,0,0)|ds)^\beta\bigg].\]
\end{lemma}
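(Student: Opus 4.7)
The plan is to reduce to a linear BSDE for the difference and then estimate $L^\beta$-norms by It\^o's formula applied to a regularization of $|\Delta Y|^\beta$, combined with the Burkholder-Davis-Gundy (BDG) inequality. Set $\Delta Y := Y_1 - Y_2$, $\Delta Z := Z_1 - Z_2$, $\Delta \xi := \xi_1 - \xi_2$, and $\Delta f(s) := f_1(s, Y_1(s), Z_1(s)) - f_2(s, Y_1(s), Z_1(s))$. By the Lipschitz property of $f_2$ in $(y, z)$, one may write
\[
f_1(s, Y_1, Z_1) - f_2(s, Y_2, Z_2) = \Delta f(s) + a(s)\, \Delta Y(s) + b(s)\, \Delta Z(s),
\]
where $a, b$ are progressively measurable and bounded by the Lipschitz constant of $f_2$ (via the standard incremental-quotient construction, defined to be $0$ where the denominators vanish). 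Hence $\Delta Y$ satisfies a linear BSDE with terminal value $\Delta \xi$ and source $\Delta f$, and the remaining task is to establish the $L^\beta$-stability of this linear BSDE.

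The key analytic step is to apply It\^o's formula to $(\varepsilon + |\Delta Y|^2)^{\beta/2}$ on $[t, T]$; the regularization is needed because $y \mapsto |y|^\beta$ is not $C^2$ at the origin when $1 < \beta < 2$. Expanding, the It\^o drift splits into a non-negative quadratic-variation term of order $(\varepsilon + |\Delta Y|^2)^{\beta/2 - 1} |\Delta Z|^2$, a source contribution driven by $\Delta f$, and a linear cross term coming from $a\, \Delta Y + b\, \Delta Z$. Young's inequality absorbs the $b\, \Delta Y\, \Delta Z$ cross term into one half of the quadratic-variation term, yielding, after taking conditional expectations (and a standard localization so the stochastic integral is a true martingale), a Gronwall-type inequality for $\mathbb{E}[(\varepsilon + |\Delta Y(t)|^2)^{\beta/2}]$ in terms of $\mathbb{E}[|\Delta \xi|^\beta]$ and $\mathbb{E}[(\int_0^T |\Delta f(s)|\, ds)^\beta]$.

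To upgrade this pointwise-in-$t$ bound to the full statement, I would apply BDG to the martingale in the It\^o expansion to control $\mathbb{E}[\sup_t |\Delta Y|^\beta]$, then use the surviving weighted $|\Delta Z|^2$ integral, combined with a H\"older step pulling out $\sup_t (\varepsilon + |\Delta Y|^2)^{(2-\beta)/2}$, to control $\mathbb{E}[(\int_0^T |\Delta Z|^2\, ds)^{\beta/2}]$. Any cross term of the shape $\mathbb{E}[\sup_t |\Delta Y|^\beta]^{1/2} \cdot (\cdots)^{1/2}$ generated by BDG is absorbed into the left-hand side by one more application of Young's inequality, and sending $\varepsilon \downarrow 0$ yields the stated bound with $C_\beta$ depending only on $\beta$, $T$, and the Lipschitz constant. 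The specialization $\xi_1 = 0$, $f_1 \equiv 0$ is then immediate. The main technical obstacle is the regime $\beta$ close to $1$: the degeneracy of $|\Delta Y|^\beta$ at the origin forces the $\varepsilon$-regularization and careful bookkeeping of the $(\beta/2 - 1)$-weights in the quadratic-variation term, which is why a naive It\^o-on-$|\Delta Y|^2$-followed-by-H\"older argument does not go through.
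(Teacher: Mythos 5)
The paper offers no proof of this lemma at all: it is quoted as a standard a priori estimate for Lipschitz BSDEs, with a pointer to Hu (2017) and the references therein (the argument originates in El Karoui--Peng--Quenez for $\beta\ge 2$ and Briand--Delyon--Hu--Pardoux--Stoica for general $\beta>1$). Your sketch reconstructs exactly that standard proof and is sound in all its essentials: the linearization of the difference driver via bounded incremental quotients is legitimate under the Lipschitz hypothesis; It\^o's formula applied to $(\varepsilon+|\Delta Y|^2)^{\beta/2}$ produces a nonnegative second-order term of size $(\varepsilon+|\Delta Y|^2)^{\beta/2-1}|\Delta Z|^2$ even when $1<\beta<2$ (since the negative $\beta(\beta-2)$ contribution is dominated using $|\Delta Y^*\Delta Z|^2\le(\varepsilon+|\Delta Y|^2)|\Delta Z|^2$, leaving a factor $\beta(\beta-1)>0$); and the H\"older/Young step that extracts $\sup_t(\varepsilon+|\Delta Y(t)|^2)^{(2-\beta)/2}$ to recover the $\big(\int_0^T|\Delta Z|^2\,ds\big)^{\beta/2}$ bound is precisely the known trick for the subquadratic regime. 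The only caveats are routine: the absorption of BDG cross terms into the left-hand side presupposes the a priori finiteness of $\mathbb{E}\big[\sup_t|\Delta Y(t)|^\beta\big]$, which should be secured by the assumed integrability together with a localization, and the $\Delta f$ contribution must be handled by Young's inequality against the sup norm rather than Gronwall alone. Worth noting for context: in this paper the lemma is only ever invoked with $\beta\in\{2,4,8\}$, so the $\varepsilon$-regularization and the delicate $1<\beta<2$ bookkeeping you rightly flag as the main technical obstacle are not actually needed for any application here; the classical $C^2$ computation with $|y|^\beta$ would suffice.
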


\subsection{the Main Results}
The object of this paper is to establish a general maximum principle for
Problem \ref{pro:2.1}. When the convexity assumption is not made on the control domain $U$,
the basic idea of deriving necessary conditions is to apply the spike variation
to the control process and derive a Taylor-type expansion for the state
process and the cost functional with respect to the spike variation of the control process.
Then using some suitable duality relations, one can obtain  a maximum principle of Pontryagin's type.\\

Define the Hamiltonian:
\begin{equation}
\label{Hamiltonian}
H(t,x,y,z,u,p,q):=\langle p,b(t,x,u)\rangle +\langle q,\sigma(t,x)\rangle +f(t,x,y,z,u).
\end{equation}
Let $\bar u(\cdot)$ be an optimal control and $(\bar x(\cdot),\bar y(\cdot),\bar z(\cdot))$ the associated state and cost process. To simplify the notations, we introduce the following abbreviations:
 \begin{eqnarray}
   \begin{split}
     \bar b(t):=b(t,\bar x(t),\bar u(t)), b(t;u):=b(t,\bar x(t),u), \delta b(t;u):=b(t,\bar x(t),u)-\bar b(t)
   \end{split}
 \end{eqnarray}
and define similarly for $\bar b_x(t),\bar b_{xx}(t),\delta b_x(t;u),\bar \sigma(t),\bar \sigma_{x}(t),\bar \sigma_{xx}(t),\bar f (t),\bar f_x(t)$, $\bar f_y(t)\bar f_z(t),\delta f(t;u)$ and so on. We introduce respectively the following two adjoint equations:
\begin{equation}
\label{eq:3.1}
\left \{
\begin{split}
&dp(t)=-\big\{\big[\bar f_y(t)+\bar f_z(t)\sigma^*_x(t)+\bar b^*_x(t)\big]p(t)+\big[\bar f_z(t)+\sigma^*_x(t)\big]q(t)+\bar f^*_x(t)\big\}dt+q(t)dW_t,\\
&p(T)=h^*_x(\bar x_T),
\end{split}
\right .
\end{equation}
and
\begin{equation}
\label{eq:3.2}
\left \{
\begin{split}
&dP_t=-\big\{\bar f_y(t)P(t)+\big[\bar f_z(t)\bar\sigma_x(t)+\bar b_x(t)\big]^*P(t)+P(t)\big[\bar f_z(t)\bar\sigma_x(t)+\bar b_x(t)\big]+\bar {\sigma}^*_x(t)P(t)\bar {\sigma}_x(t)\\
&\qquad \quad+\bar f_z(t)Q(t)+\sigma^*_x(t)Q(t)+Q(t)\sigma_x(t)+p^*(t)\bar b_{xx}(t)+\big[\bar f_z(t)p(t)+q(t)\big]^*\bar \sigma_{xx}(t)\\
&\qquad \quad+[I,p(t),\bar \sigma^*_x(t)p(t)+q(t)]D^2 \bar f(t)[I,p(t),\bar \sigma^*_x(t)p(t)+q(t)]^T\big\}dt+Q(t)dW_t,\\
&P(T)=h_{xx}(\bar x_T),
\end{split}
\right .
\end{equation}
where $D^2f$ is the Hessian matrix of $f$ with respect to $(x,y,z)$. \\

Under Assumptions \ref{ass:1.1}
and \ref{ass:1.2}, from Lemma \ref{lemma standard esimate BSDE}, it is easy to see that for any  admissible pair
$(\bar u(\cdot), \bar x(\cdot), \bar y(\cdot), \bar z(\cdot))$, BSDEs \eqref{eq:3.1} and \eqref{eq:3.2}
admit unique solutions $(p (\cdot), q (\cdot)) \in S^8_{\mathbb F} (0, T; \mathbb {R}^n) \times
L^8_{\mathbb F} (0, T;L^2 (0, T; \mathbb {R}^{n \times d}))$ and $(P (\cdot), Q (\cdot)) \in
S^8_{\mathbb F} (0, T; \mathbb {R}^{n \times n}) \times L^8_{\mathbb F} (0, T; L^2 (0, T;
(\mathbb {R}^{n \times n})^d))$, respectively. We call \eqref{eq:3.1} and \eqref{eq:3.2}
the first-order and the second-order adjoint equations
of the control system \eqref{eq:1.1}-\eqref{eq:1.2}, respectively, where the unique adapted solutions
$(p (\cdot), q (\cdot))$ and $(P (\cdot), Q (\cdot))$ are referred as the first-order and the second-order
adjoint processes.
We also use the abbreviations:
\begin{eqnarray}
  \begin{split}
    &H(t)=H(t,\bar x(t),\bar y(t),\bar z(t),\bar u(t), p(t),q(t)),
  \\&H_x(t)=H_x(t,\bar x(t),\bar y(t),\bar z(t),\bar u(t), p(t),q(t)), \\&H_{xx}(t)=H_{xx}(t,\bar x(t),\bar y(t),\bar z(t),\bar u(t), p(t),q(t)),
  \\& \delta H(t,v)=H(t,\bar x(t),\bar y(t),\bar z(t), v, p(t),q(t))-H(t,\bar x(t),\bar y(t),\bar z(t), \bar u(t), p(t),q(t)).
  \end{split}
\end{eqnarray}

In the following, we state the main
results of our paper.  The first
is the   first-order maximum principle.

\begin{theorem} \label{thm:1.1}
Let Assumption \ref{ass:1.1} be satisfied. Let
  $(\bar u(\cdot); \bar x(\cdot),\bar y(\cdot),\bar z(\cdot))$ be an optimal pair. Then there is a subset $I_0 \subset [0, 1]$ which is of full measure, such that at each $t\in I_0 $ the minimum condition

  \begin{eqnarray}\label{eq:3.5}
  H(t,\bar x(t),\bar y(t),\bar z(t), \bar u(t), p(t),q(t))=\min_{v\in U} H(t,\bar x(t),\bar y(t),\bar z(t), v, p(t),q(t)),
  a.s.
  \end{eqnarray}
  holds.
\end{theorem}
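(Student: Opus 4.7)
The plan is to apply a spike variation adapted to the BSDE cost functional in the spirit of Hu \cite{Hu17}. Fix $t_0 \in [0, T)$ and let $v$ be a bounded $\mathcal{F}_{t_0}$-measurable $U$-valued random variable. For small $\delta > 0$, define the perturbed control
\[u^\delta(t) := v\mathbf{1}_{[t_0, t_0+\delta]}(t) + \bar u(t)\mathbf{1}_{[t_0, t_0+\delta]^c}(t),\]
and let $(x^\delta, y^\delta, z^\delta)$ denote the corresponding state and cost processes.

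Since $\sigma$ is independent of $u$, standard $L^p$ estimates for \eqref{eq:1.1} under Assumption \ref{ass:1.1} give $\mathbb{E}\sup_{0\le t\le T}|x^\delta(t)-\bar x(t)|^p = O(\delta^p)$ for each $p\in[1,8]$. Introduce the first-order forward variation $x_1$ solving the linear SDE
\[dx_1(t) = \bigl[\bar b_x(t) x_1(t) + \delta b(t; u^\delta(t))\bigr] dt + \bar\sigma_x(t) x_1(t) dW_t, \qquad x_1(0) = 0,\]
and verify that $\mathbb{E}\sup_t |x^\delta - \bar x - x_1|^p = o(\delta^p)$. In parallel, define the first-order BSDE variation $(y_1,z_1)$ by
\[-dy_1(t) = \bigl[\bar f_x(t) x_1(t) + \bar f_y(t) y_1(t) + \bar f_z(t) z_1(t) + \delta f(t; u^\delta(t))\bigr] dt - z_1(t) dW_t, \qquad y_1(T) = h_x(\bar x(T)) x_1(T),\]
and use Lemma \ref{lemma standard esimate BSDE} with the forward estimates to show $y^\delta(0) - \bar y(0) = y_1(0) + o(\delta)$. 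This BSDE stability step is the principal technical obstacle, because the dependence of $f$ on $z$ requires a careful linearisation in the spirit of \cite{Hu17}.

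To surface the Hamiltonian, introduce the auxiliary process $\tilde y_1 := y_1 - p\cdot x_1$. Applying It\^o's formula using the adjoint equation \eqref{eq:3.1}, the $\bar b_x$, $\bar\sigma_x$ and $\bar f_x$ contributions telescope, and what remains is
\[-d\tilde y_1(t) = \bigl[\bar f_y(t)\tilde y_1(t) + \bar f_z(t)\tilde z_1(t) + \delta H(t; u^\delta(t))\bigr] dt - \tilde z_1(t) dW_t, \qquad \tilde y_1(T) = 0,\]
where $\tilde z_1 := z_1 - p\cdot\bar\sigma_x x_1 - q\cdot x_1$ and $\delta H(t;v) = p(t)\cdot\delta b(t;v) + \delta f(t;v)$ since $\sigma$ is $u$-free. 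Multiplying by the positive weight $\Gamma$ solving $d\Gamma = \Gamma(\bar f_y dt + \bar f_z dW_t)$ with $\Gamma(0)=1$ (well-defined by boundedness of $\bar f_y,\bar f_z$), the linear $\bar f_y,\bar f_z$ terms cancel in $d(\Gamma\tilde y_1)$, yielding
\[y_1(0) = \mathbb{E}\int_{t_0}^{t_0+\delta} \Gamma(t)\,\delta H(t; v)\, dt.\]

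Optimality of $\bar u$ gives $y_1(0) + o(\delta) \ge 0$. Dividing by $\delta$ and applying Lebesgue's differentiation theorem produces a full-measure set $I_0 \subset [0,T]$ such that $\mathbb{E}[\Gamma(t_0)\delta H(t_0; v)] \ge 0$ at every $t_0 \in I_0$ for each admissible $v$. Choosing $v = v_0\mathbf{1}_A + \bar u(t_0)\mathbf{1}_{A^c}$ with $v_0\in U$ deterministic and $A\in\mathcal{F}_{t_0}$ arbitrary, and using $\Gamma>0$ together with the arbitrariness of $A$, upgrades this to $\delta H(t_0; v_0) \ge 0$ almost surely for each $v_0\in U$. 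Finally, by separability of $U$ and the continuity of $H$ in $u$ from Assumption \ref{ass:1.1}, take a countable dense subset of $U$, intersect the associated null sets, and pass to the limit to obtain, on $I_0$, the pointwise minimum condition \eqref{eq:3.5} $\mathbb{P}$-almost surely for all $v\in U$.
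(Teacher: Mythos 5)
Your argument follows the same route as the paper's: a first-order forward variation $x_1$, a linearized BSDE whose solution after subtracting $p^*x_1$ (which is exactly the paper's $y_1$ of \eqref{BSDE First Order}, with zero terminal value and driver $\bar f_y\tilde y_1+\bar f_z\tilde z_1+\delta H$) is represented via the exponential weight $\Gamma=\gamma$ of \eqref{adjoint linear SDE} as $y_1(0)=\mathbb E\int\gamma\,\delta H\,dt$, followed by the optimality inequality and localization to almost every $t$. The only real difference is that you spike on an interval $[t_0,t_0+\delta]$ and invoke Lebesgue differentiation, whereas the paper spikes on a measurable set $E_\varepsilon$ chosen so that the first-order term equals $\varepsilon$ times the global integral; both are standard and both work here because $\sigma$ is control-free, so this is a variant within the same method rather than a different proof.

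The one step that does not go through as written is the localization. You assert that Lebesgue differentiation produces a single full-measure set $I_0$ on which $\mathbb E[\Gamma(t_0)\delta H(t_0;v)]\ge 0$ holds \emph{for each admissible} $v$, and you then let $A\in\mathcal F_{t_0}$ range over all events. But the exceptional null set produced by Lebesgue differentiation depends on the perturbation, and the family $v=v_0 1_{A}+\bar u(t_0)1_{A^c}$ with $A\in\mathcal F_{t_0}$ is uncountable, so these null sets cannot simply be united. The standard repair (which the paper itself carries out only in the second-order proof, via the rational times $r_i$, the countable generators $A_{ij}$ of $\mathcal F_{r_i}$ and the countable dense subset $\{u_k\}$ of $U$) is to restrict to a countable family of perturbations before differentiating: for instance, for each $v_0$ in a countable dense subset of $U$ use the single predictable perturbation $v(t)=v_0 1_{\{\delta H(t;v_0)<0\}}+\bar u(t)1_{\{\delta H(t;v_0)\ge 0\}}$, for which the Lebesgue-point inequality reads $-\mathbb E\big[\big(\Gamma(t_0)\delta H(t_0;v_0)\big)^-\big]\ge 0$ and yields $\delta H(t_0;v_0)\ge 0$ a.s.\ directly. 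With that adjustment, and granting the expansion $y^\delta(0)-\bar y(0)=y_1(0)+o(\delta)$ which you correctly identify as the technical core and which the paper establishes as Lemma \ref{lemma first order}, your proof is complete.
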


The maximum principle is a powerful tool for the study of optimal stochastic control problems. However, it is not always effective. For example, if the optimal admissible pair
$(\bar u(\cdot); \bar x(\cdot),\bar y(\cdot),\bar z(\cdot))$
is such that $h_x(\bar x(T)) = 0, f_x(t, \bar x(\cdot),\bar y(\cdot),\bar z(\cdot), \bar u(\cdot)) = 0, $ a.e. a.s..
In this case, the adjoint process
$(p(\cdot), q(\cdot))$, defined by BSDE
\eqref{eq:3.1}, is identically zero, and the maximum condition \eqref{eq:3.5} is trivial, giving no information about the optimal control $u(\cdot)$. Such a control $u(\cdot)$ is a singular one. There are other kinds of singular controls, for which the above maximum principle is ineffective.
In this paper, we discuss singular optimal stochastic controls in the following sense of maximum principle.

\begin{definition}
An admissible control $\tilde u$ is called singular on control region $V$ if $V \subset U$ is nonempty and for $\text{a.e. } t \in [0,T]$, we have
\[H(t,\tilde x(t),\tilde y(t),\tilde z(t),\tilde u(t),\tilde p(t),\tilde q(t))=H(t,\tilde x(t),\tilde y(t),\tilde z(t),v,\tilde p(t),\tilde q(t)), \forall v \in V.\]
\end{definition}

The main result of this paper is the following second-order maximum principle which  involves the second-order adjoint processes $(P(\cdot), Q(\cdot))$  given in (\ref{eq:3.2}).
\begin{theorem}\label{thm:2.2}
Let Assumption \ref{ass:1.1} and
 \ref{ass:1.2} be satisfied. Let
  $(\bar u(\cdot); \bar x(\cdot),\bar y(\cdot),\bar z(\cdot))$ be an optimal pair and be singular on the control region $V$. Then there exists $I_0 \subset [0,1]$ which is of full measure, such that at each $t \in I_0$, $\bar u(\cdot)$ satisfies, in addition to the first order maximum condtion, the following second order maximum condition:
\begin{equation}\label{second order condition}
(\delta G(t;v)+\delta b^*(t;v)P(t))\delta b(t;v) \ge 0, \forall v  \in V, a.s.,
\end{equation}
where we have used the following short-hand notation:
\[G(t;u):=H_x(t;u)+\bar f_y(t;u)p^*(t)+\bar f_z(t;u)(p^*(t)\bar \sigma_x(t)+q^*(t)).\]
\end{theorem}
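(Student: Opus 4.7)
The plan is to use a spike variation combined with a second-order Taylor expansion of the forward-backward system and duality against both adjoints $(p,q)$ and $(P,Q)$. Fix $v\in V$ and, for each $\delta>0$, a Borel set $E_\delta\subset[0,T]$ with $|E_\delta|=\delta$, and set $u^\delta(t):=v\mathbf{1}_{E_\delta}(t)+\bar u(t)\mathbf{1}_{E_\delta^c}(t)$. Decompose $x^\delta-\bar x=x_1+x_2+\rho_x$, where $x_1$ solves the linearized SDE forced by $\delta b(t;v)\mathbf{1}_{E_\delta}$, and $x_2$ absorbs the quadratic corrections $\tfrac12\bar b_{xx}(x_1,x_1)$, $\tfrac12\bar\sigma_{xx}(x_1,x_1)$ and the spike-cross term $\delta b_x(t;v)x_1\mathbf{1}_{E_\delta}$. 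Since $\sigma$ does not depend on $u$, standard Gr\"onwall/BDG estimates yield $\|x_1\|_{S^p_{\mathbb F}}=O(\delta)$, $\|x_2\|_{S^p_{\mathbb F}}=O(\delta^2)$ and $\|\rho_x\|_{S^p_{\mathbb F}}=o(\delta^2)$. An analogous splitting $y^\delta-\bar y=y_1+y_2+\rho_y$, $z^\delta-\bar z=z_1+z_2+\rho_z$ holds: $(y_1,z_1)$ is the linearized BSDE terminated at $h_x(\bar x_T)x_1(T)$ with spike forcing $\delta f(t;v)\mathbf{1}_{E_\delta}$, while $(y_2,z_2)$ is the second-order BSDE terminated at $\tfrac12 h_{xx}(\bar x_T)(x_1(T),x_1(T))$ whose driver collects all $D^2\bar f$-bilinear terms in $(x_1,y_1,z_1)$, the linear part in $(x_2,y_2,z_2)$, and the spike-cross terms $\delta f_x x_1,\delta f_y y_1,\delta f_z z_1$ on $E_\delta$. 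Lemma~\ref{lemma standard esimate BSDE} bounds $\rho_y,\rho_z$ by $o(\delta^2)$ in $M^2_{\mathbb F}$, giving $J(u^\delta)-J(\bar u)=y_1(0)+y_2(0)+o(\delta^2)$.

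For the first-order term, I apply It\^o's formula to $\langle p(t),x_1(t)\rangle$ and pair with $(y_1,z_1)$. Equation~\eqref{eq:3.1} is designed so that the $\bar b_x,\bar\sigma_x,\bar f_x,\bar f_y,\bar f_z$ contributions cancel and one obtains $y_1(0)=\mathbb{E}\int_0^T\delta H(t;v)\mathbf{1}_{E_\delta}(t)\,dt$. Singularity of $\bar u$ on $V$ forces $\delta H(t;v)\equiv 0$ for $v\in V$, hence $y_1(0)=0$ and the expansion is driven by $y_2(0)$. To identify $y_2(0)$, apply It\^o to $\langle p(t),x_2(t)\rangle+\tfrac12\langle P(t)x_1(t),x_1(t)\rangle$ and pair with the $(y_2,z_2)$ BSDE. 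The drift of \eqref{eq:3.2} is engineered so that the $\bar b_{xx},\bar\sigma_{xx}$ and $D^2\bar f$ quadratic contributions cancel against $\tfrac12\,d\langle Px_1,x_1\rangle$, and the linear parts in $(x_2,y_2,z_2)$ are absorbed by \eqref{eq:3.1}; only the spike-supported pieces survive. The ``buildup'' of $x_1$ inside $E_\delta$---where $x_1(s)\approx(s-t^*)\delta b(t^*;v)$---produces, through $2\langle P x_1,\delta b\rangle\mathbf{1}_{E_\delta}$, the pointwise quadratic $\delta b^*(t;v)P(t)\delta b(t;v)$; the spike-cross terms $\delta b_x x_1,\delta f_x x_1,\delta f_y y_1,\delta f_z z_1$, paired against their It\^o-duality conjugates $p,p,p,\bar\sigma_x^* p+q$, assemble precisely into $\delta G(t;v)\delta b(t;v)$. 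Combining,
\begin{equation*}
y_2(0)=\mathbb{E}\int_0^T\big\{\delta G(t;v)\delta b(t;v)+\delta b^*(t;v)P(t)\delta b(t;v)\big\}\mathbf{1}_{E_\delta}(t)\,dt+o(\delta^2).
\end{equation*}

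Optimality gives $y_2(0)\geq o(\delta^2)$. Dividing by $\delta$ and shrinking $E_\delta$ to density points via the vector-valued Lebesgue differentiation argument used by Tang \cite{Ta10} to handle the $\omega$-dependence yields a full-measure set $I_0\subset[0,T]$ on which \eqref{second order condition} holds almost surely for every $v$ in a countable dense subset of $V$; continuity of the integrand in $v$ then extends the inequality to all of $V$. The main obstacle is the second-order duality identification: rigorously justifying the cancellations between the quadratic variations of $(x_1,y_1,z_1)$ and the drift of \eqref{eq:3.2}---in particular, handling the $z_1$-process (only in $L^2_t$), which must be converted to its conjugate $\bar\sigma_x^* p+q$ so as to produce the $\bar f_z$ piece of $G$---and controlling the residual in $o(\delta^2)$ through Hu's variational technique for BSDEs (see \cite{Hu17}) are the genuinely new ingredients beyond the forward-only case of \cite{Ta10}.
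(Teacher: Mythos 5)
Your overall architecture --- spike variation, second-order expansion of the forward and backward equations, duality against $(p,q)$ and $(P,Q)$, vanishing of the first-order term by singularity, and a Lebesgue-point localization over a countable dense family with a continuity argument in $v$ --- is exactly the route the paper takes. (The paper writes the variational BSDEs in Hu's form with zero terminal data, absorbing $h_x(\bar x_T)x_1(T)$ into $p^*(T)x_1(T)$ and $\tfrac12 h_{xx}(\bar x_T)(x_1(T))^2$ into $\tfrac12 P(T)(x_1(T))^2$; your terminal-condition formulation becomes equivalent to it once you perform the It\^o pairings you describe.) However, two steps in your duality computation do not close as written, and both sit precisely where the recursive structure enters.

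First, pairing $p$ with $x_1$ alone does not cancel the $\bar f_y y_1+\bar f_z z_1$ terms in the driver of your $(y_1,z_1)$ equation: nothing in $d\langle p(t),x_1(t)\rangle$ produces terms proportional to $y_1$ or $z_1$, so after substitution you are left with $\mathbb E\int_0^T\{\bar f_y(y_1-p^*x_1)+\bar f_z(z_1-(p^*\bar\sigma_x+q^*)x_1)\}\,dt$, which is not zero. Consequently the identity $y_1(0)=\mathbb E\int_0^T\delta H(t;v)\mathbf 1_{E_\delta}(t)\,dt$ is false in general; the correct representation carries the exponential weight $\gamma(t)$ solving $d\gamma=\bar f_y\gamma\,dt+\bar f_z\gamma\,dW_t$, $\gamma(0)=1$ (Section 4.1 of the paper), giving $y_1(0)=\mathbb E\int_0^T\gamma(t)\,\delta H(t;v)\mathbf 1_{E_\delta}(t)\,dt$, and every second-order duality identity acquires the same factor $\gamma(t)$. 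Since $\gamma>0$, the final pointwise inequality \eqref{second order condition} is unaffected, so the gap is repairable, but the cancellation you assert is not the one that actually occurs. Second, your displayed formula for $y_2(0)$ is inconsistent in the order of $\delta$: the integrand $(\delta G+\delta b^*P)\delta b\,\mathbf 1_{E_\delta}$ integrates to $O(\delta)$, which would contradict both your own $o(\delta^2)$ remainder and the vanishing of the first-order term. The correct intermediate identity is $y_2(0)=\mathbb E\int_0^T\gamma(t)\big(\delta G(t;v)+\delta b^*(t;v)P(t)\big)x_1(t)\,dt+o(\delta^2)$, with $x_1$ --- not $\delta b\,\mathbf 1_{E_\delta}$ --- in the last slot; the extra factor of order $\delta$ comes from $x_1(s)\approx(s-t^*)\delta b(t^*;v)$ on the spike, exactly as your prose notes, so one must normalize by $\delta^2$ (not $\delta$) before shrinking $E_\delta$ to a density point. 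With these two repairs your argument coincides with the paper's proof.
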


\section{First and Second Order Taylor Expansion}
In this section, we introduce the
first and the
second order variation equation for the optimal
pair $(\bar u(\cdot); \bar x(t),\bar y(t),\bar z(t))$
by spike variation methods and
establish  the dependence of the system state  on control actions.\\

Let $u(\cdot)\in {\cal U}_{ad}$, $\varepsilon >0$ and $E_{\varepsilon} \subset [0,T]$ be a Borel set with Borel measure $|E_{\varepsilon}|=\varepsilon$. Define the spike variation $u^{\varepsilon}$ of the optimal control $\bar u$ as
\[u^\varepsilon(t)=\bar u(t)I_{E^c_{\varepsilon}}(t)+u(t)
I_{E_{\varepsilon}}(t).\]

Let $x_i(\cdot),i=1,2$, be the solution for the following SDEs which is regarded as the corresponding first  and
second order variation equations for the optimal
state process $\bar x(\cdot)$:
:
\begin{equation}
\label{first oder SDE}
\left \{
\begin{split}
&dx_1(t)=\{\bar b_x(t)x_1(t)+\delta b(t;u^{\varepsilon}(t))\}dt+\bar \sigma_x(t)x_1(t)dW_t,\\
&x_1(0)=0
\end{split}
\right.
\end{equation}
and
\begin{equation}
\label{second order SDE}
\left \{
\begin{split}
&dx_2(t)=\{\bar b_x(t)x_2(t)+\delta b_x(t;u^{\varepsilon}(t))x_1(t)+\frac{1}{2}\bar b_{xx}(t)(x_1(t))^2\}dt\\
&\qquad \qquad +\{\bar \sigma_x(t)x_2(t)+\frac{1}{2}\bar \sigma_{xx}(t)(x_1(t))^2\}dW_t,\\
&x_2(0)=0,
\end{split}
\right .
\end{equation}
where $b_{xx}(t)(x_1(t))^2=(b^1_{xx}(t)(x_1(t))^2,...,b^n_{xx}(t)(x_1(t))^2)^T$ and similarly for $\sigma_{xx}(t)(x_1(t))^2$.\\

The following lemma is a standard result and has been proved in \cite{Ta10}.
\begin{lemma}
	\label{lemma estiamtion SDE}
	Assume that Assumption 1 and Assumption 2 are satisfied.  Then we have
	\begin{equation*}
	\begin{split}
	&\mathbb E\bigg[\sup_{0\le t \le T}|x(t;u^{\varepsilon})-\bar x(t)|^8\bigg]=O(\varepsilon^8),\\	
	&\mathbb E\bigg[\sup_{0\le t \le T}|x_1(t)|^8\bigg]=O(\varepsilon^8),\\
		&\mathbb E\bigg[\sup_{0\le t \le T}|x(t;u^{\varepsilon})-\bar x(t)-x_1(t)|^2\bigg]=O(\varepsilon^{4}),\\
		&\mathbb E\bigg[\sup_{0\le t \le T}|x_2(t)|^2\bigg]=O(\varepsilon^{4}),\\
		&\mathbb E\bigg[\sup_{0\le t \le T}|x(t;u^{\varepsilon})-\bar x(t)-x_1(t)-x_2(t)|^2\bigg]=o(\varepsilon^{4}).\\
		\end{split}
	\end{equation*}
\end{lemma}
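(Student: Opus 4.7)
The plan is to handle the five estimates in sequence, each via Burkholder--Davis--Gundy (BDG) combined with Gronwall's inequality, with the key input being that the variation $\delta b(t;u^\varepsilon)$ (and similarly $\delta\sigma$ is zero since $\sigma$ does not depend on $u$) is supported on the set $E_\varepsilon$ of Lebesgue measure $\varepsilon$. Throughout I would exploit the decomposition
\begin{equation*}
b(t,x(t;u^\varepsilon),u^\varepsilon(t))-b(t,\bar x(t),\bar u(t))=\bigl[b(t,x(t;u^\varepsilon),u^\varepsilon(t))-b(t,\bar x(t),u^\varepsilon(t))\bigr]+\delta b(t;u^\varepsilon(t)),
\end{equation*}
so that Lipschitz control in $x$ is separated from the genuine $u$-perturbation.

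For the first estimate, BDG together with the linear growth of the coefficients gives
\begin{equation*}
\mathbb E\Bigl[\sup_{s\le t}|x(s;u^\varepsilon)-\bar x(s)|^{8}\Bigr]\le C\int_0^t\mathbb E\Bigl[\sup_{r\le s}|x(r;u^\varepsilon)-\bar x(r)|^{8}\Bigr]ds+C\,\mathbb E\Bigl[\Bigl(\int_0^T|\delta b(s;u^\varepsilon)|\,ds\Bigr)^{8}\Bigr].
\end{equation*}
By H\"older's inequality, $\bigl(\int_{E_\varepsilon}|\delta b|\,ds\bigr)^{8}\le \varepsilon^{7}\int_{E_\varepsilon}|\delta b|^{8}ds$, and the integrand is controlled by $K_0^{8}(1+|\bar x(s)|+|u(s)|+|\bar u(s)|)^{8}$, whose expectation is uniformly bounded by admissibility and Assumption~\ref{ass:1.1}. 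This gives $O(\varepsilon^{8})$ after Gronwall. The same machinery applied to the linear SDE \eqref{first oder SDE} (whose sole forcing is $\delta b(t;u^\varepsilon)$) yields the bound on $x_1$.

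For the third and fourth estimates, I would set $r_1(t):=x(t;u^\varepsilon)-\bar x(t)-x_1(t)$ and Taylor-expand $b(t,x(t;u^\varepsilon),u^\varepsilon)$ and $\sigma(t,x(t;u^\varepsilon))$ around $\bar x(t)$. The resulting SDE for $r_1$ has drift $\bar b_x(t)r_1(t)$ plus a remainder composed of (i) the mixed term $[b_x(t,\bar x(t),u^\varepsilon(t))-\bar b_x(t)](x(t;u^\varepsilon)-\bar x(t))$ supported on $E_\varepsilon$, which is bounded in $L^2(dt)^{\otimes}L^2$ by $\varepsilon\cdot\sup|x-\bar x|$ and hence of order $\varepsilon^{2}$, and (ii) a genuine second-order Taylor remainder of size $|x(\cdot;u^\varepsilon)-\bar x|^{2}$, which is also $\varepsilon^{2}$ in the relevant norm by the first estimate. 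Plugging into BDG/Gronwall gives $\mathbb E[\sup|r_1|^{2}]=O(\varepsilon^{4})$. The bound on $x_2$ is obtained directly from its defining linear SDE \eqref{second order SDE}, using $\mathbb E[\sup|x_1|^{4}]=O(\varepsilon^{4})$ (to handle $(x_1)^{2}$) together with the same $\varepsilon\cdot\sup|x_1|$ trick for the $\delta b_x(t;u^\varepsilon)x_1(t)$ forcing.

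The delicate step, which I expect to be the main obstacle, is the final $o(\varepsilon^{4})$ bound for $r_2(t):=x(t;u^\varepsilon)-\bar x(t)-x_1(t)-x_2(t)$. Writing out the SDE satisfied by $r_2$ by matching the second-order Taylor expansion of the coefficients with \eqref{first oder SDE}--\eqref{second order SDE}, the forcing splits into a term of the form $\bar b_x(t) r_2(t)$ (absorbed by Gronwall) plus genuine remainders: a third-order Taylor remainder $\int_0^1(1-\theta)^2 [b_{xx}(t,\bar x+\theta(x^\varepsilon-\bar x))-b_{xx}(t,\bar x)] d\theta\,(x^\varepsilon-\bar x)^{2}$, and the cross term $[b_x(t,\bar x,u^\varepsilon)-\bar b_x(t)](x^\varepsilon-\bar x-x_1)$ supported on $E_\varepsilon$. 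The former is $o(\varepsilon^{4})$ because $b_{xx}$ is bounded and continuous in $x$ (Assumption~\ref{ass:1.2}), so a dominated-convergence/uniform-continuity argument gives that the coefficient tends to zero in probability on a set of positive measure while $|x^\varepsilon-\bar x|^{4}=O(\varepsilon^{4})$; the latter combines $|E_\varepsilon|=\varepsilon$ with $\mathbb E[\sup|x^\varepsilon-\bar x-x_1|^{2}]=O(\varepsilon^{4})$ to give $\varepsilon^{2}\cdot \varepsilon^{4}=o(\varepsilon^{4})$ in the relevant squared-forcing norm. Collecting these via Gronwall yields the claim.
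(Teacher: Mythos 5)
The paper does not actually prove this lemma --- it states that the result ``is a standard result and has been proved in \cite{Ta10}'' --- so there is no in-paper argument to compare against; your proof is the standard spike-variation argument that the cited reference carries out, and it is essentially correct. The key mechanisms are all present: splitting each coefficient increment into a Lipschitz-in-$x$ part plus the perturbation $\delta b$ supported on $E_\varepsilon$, using H\"older on $E_\varepsilon$ to convert the small support into powers of $\varepsilon$ (with the eighth-moment admissibility bound on $u$ making $\sup_s\mathbb E[|\delta b(s)|^8]$ finite), closing each step with BDG and Gronwall, and obtaining the final $o(\varepsilon^4)$ from continuity of the second derivatives together with the uniform integrability supplied by the eighth-moment estimates. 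Two bookkeeping points are worth fixing. First, the integral form of the second-order Taylor remainder carries weight $(1-\theta)$, not $(1-\theta)^2$. Second, and more substantively, in the equation for $r_2(t):=x(t;u^\varepsilon)-\bar x(t)-x_1(t)-x_2(t)$ you list only the third-order Taylor remainder and the $E_\varepsilon$-supported cross term, but there is a third forcing term, $\tfrac12\bar b_{xx}(t)\bigl[(x^\varepsilon(t)-\bar x(t))^2-(x_1(t))^2\bigr]$ (and its $\bar\sigma_{xx}$ analogue in the martingale part), which is neither a remainder nor supported on $E_\varepsilon$. It is bounded by $C\,(|x^\varepsilon-\bar x|+|x_1|)\,|r_1|$, and estimating it requires the fourth-moment version $\mathbb E[\sup_{t}|r_1(t)|^4]=O(\varepsilon^8)$ of your third estimate --- obtainable by running the same argument at the higher power, which is precisely why the lemma keeps eighth moments of $x^\varepsilon-\bar x$ and $x_1$ --- yielding a contribution of order $\varepsilon^6=o(\varepsilon^4)$. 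With that term accounted for, the scheme closes as you describe.
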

Let $(y_1,z_1)$ be the solution of the following BSDE:
\begin{equation}
\label{BSDE First Order}
\left \{
\begin{split}
&dy_1(t)=-\{\bar f_y(t)y_1(t)+\bar f_z(t)z_1(t)+p^*(t)\delta b(t;u^{\varepsilon}(t))+\delta f(t;u^{\varepsilon}(t))\}dt+z_1(t)dW_t\\
&y_1(T)=0.
\end{split}
\right .
\end{equation}

\begin{lemma}
	\label{lemma first order}
Assume Assumption \ref{ass:1.1} to be satisfied. Then
the following estimation holds:
\[\mathbb E\bigg[\sup_{0 \le t \le T}|y^{\varepsilon}(t)-\bar y(t)-p^*(t)x_1(t)-y_1(t)|^4\bigg]=o(\varepsilon^4).\]
\end{lemma}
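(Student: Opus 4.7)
The plan is to identify the candidate first-order correction $Y_1(t) := p^*(t) x_1(t) + y_1(t)$ together with its companion $Z_1(t) := q^*(t) x_1(t) + p^*(t)\bar\sigma_x(t) x_1(t) + z_1(t)$, and show that the residual $\eta := y^\varepsilon - \bar y - Y_1$ solves a linear Lipschitz BSDE whose terminal data and inhomogeneous term are of size $o(\varepsilon)$ in the $L^4$ sense. It\^o's formula applied to $p^*(t) x_1(t)$, using the adjoint equation \eqref{eq:3.1} and the linear equation \eqref{first oder SDE}, makes all $\bar b_x, \bar b_x^*, \bar\sigma_x, \bar\sigma_x^*$ contributions cancel by design of the adjoint, and leaves
\[
-d(p^* x_1) = \{\bar f_y p^* x_1 + \bar f_z (p^*\bar\sigma_x + q^*) x_1 + \bar f_x x_1 - p^* \delta b(\cdot;u^\varepsilon)\}\,dt - (p^*\bar\sigma_x + q^*) x_1 \,dW_t.
\]
Adding the BSDE \eqref{BSDE First Order} for $y_1$ cancels the $-p^*\delta b$ term (which is why $y_1$ was designed with that driver) and produces
\[
-dY_1 = \{\bar f_y Y_1 + \bar f_z Z_1 + \bar f_x x_1 + \delta f(\cdot;u^\varepsilon)\}\,dt - Z_1 \,dW_t, \qquad Y_1(T) = h_x(\bar x(T)) x_1(T).
\]

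Writing $\Delta x := x^\varepsilon - \bar x$, $\Delta y := y^\varepsilon - \bar y$, $\Delta z := z^\varepsilon - \bar z$, $r_x := \Delta x - x_1 - x_2$ and Taylor-expanding the driver of $y^\varepsilon$ in $(x,y,z)$ with integral remainders, I obtain uniformly bounded processes $A, B, C$ (mean values of $f_x, f_y, f_z$ between $(\bar x, \bar y, \bar z)$ and $(x^\varepsilon, y^\varepsilon, z^\varepsilon)$, evaluated at $u^\varepsilon$) such that $f(\cdot, x^\varepsilon, y^\varepsilon, z^\varepsilon, u^\varepsilon) - \bar f = A\Delta x + B\Delta y + C\Delta z + \delta f(\cdot;u^\varepsilon)$. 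Subtracting the $Y_1$-BSDE from the $\Delta y$-BSDE and substituting $\Delta y = \eta + Y_1$, $\Delta z = \zeta + Z_1$, $\Delta x = x_1 + x_2 + r_x$, the $\delta f$ and $\bar f_x x_1$ terms cancel, and with $\zeta := z^\varepsilon - \bar z - Z_1$ I am left with the linear BSDE
\[
-d\eta = \{B\eta + C\zeta + R(t)\}\,dt - \zeta \,dW_t,\qquad \eta(T) = (D - h_x(\bar x(T)))x_1(T) + D(x_2(T) + r_x(T)),
\]
where $D := \int_0^1 h_x(\bar x(T) + \lambda\Delta x(T))\,d\lambda$ and
\[
R(t) := (A-\bar f_x)x_1 + A(x_2 + r_x) + (B-\bar f_y) Y_1 + (C-\bar f_z) Z_1.
\]
Since $B, C$ are uniformly bounded by Assumption \ref{ass:1.1}, Lemma \ref{lemma standard esimate BSDE} with $\beta = 4$ reduces the claim to $\mathbb{E}[|\eta(T)|^4] + \mathbb{E}\bigl[\bigl(\int_0^T |R(t)|\,dt\bigr)^4\bigr] = o(\varepsilon^4)$.

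The ``difference quotient'' factors $A - \bar f_x$, $B - \bar f_y$, $C - \bar f_z$, $D - h_x(\bar x(T))$ are uniformly bounded by $2K_0$ and tend to zero in probability by continuity of the derivatives together with $\Delta x, \Delta y, \Delta z \to 0$ (which follow from Lemma \ref{lemma estiamtion SDE} and a standard BSDE estimate applied to $(\Delta y, \Delta z)$). Combined with $\mathbb{E}\sup|x_1|^8 = O(\varepsilon^8)$, uniform $L^p$-bounds on $p, q, Y_1, Z_1$, H\"older's inequality and dominated convergence, these contributions are $o(\varepsilon^4)$. The remaining pieces $A(x_2 + r_x)$ and $D(x_2(T) + r_x(T))$ are handled by Lemma \ref{lemma estiamtion SDE}, once the $L^2$-rates for $x_2$ and $r_x$ are upgraded to $L^4$-rates by repeating the Gr\"onwall/BDG argument of \cite{Ta10} with $\beta = 4$. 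The It\^o cancellation and the final BSDE-stability step are mechanical; the main obstacle is organizing the dominated-convergence argument for the difference-quotient terms and establishing the higher-moment extension of Lemma \ref{lemma estiamtion SDE} needed to close the fourth-power estimate.
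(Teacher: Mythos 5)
Your proposal is correct and follows essentially the same route as the paper: the same decomposition $\tilde y^\varepsilon = y^\varepsilon-\bar y-p^*x_1-y_1$ (with the matching $\tilde z^\varepsilon$), the same It\^o computation exploiting the design of the adjoint equation \eqref{eq:3.1} and of \eqref{BSDE First Order}, a Taylor expansion of the driver, and a reduction to the stability estimate of Lemma \ref{lemma standard esimate BSDE}. Your version is in fact slightly more careful than the paper's, which silently drops the terminal term $h(x^\varepsilon(T))-h(\bar x(T))-h_x(\bar x(T))x_1(T)$ and the $L^4$-upgrade of the rates in Lemma \ref{lemma estiamtion SDE} that you make explicit.
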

\begin{proof}
 Define
\[\tilde y^{\varepsilon}(t):=y^{\varepsilon}(t)-\bar y(t)-p^*(t)x_1(t)-y_1(t)\]
and
\[\tilde z^{\varepsilon}(t):=z^{\varepsilon}(t)-\bar z(t)-p^*(t)\bar \sigma_x(t)x_1(t)-q^*(t)x_1(t)-z_1(t).\]
Applying It\^o's formular to $\tilde y^{\varepsilon}$, we have
\[d\tilde y^{\varepsilon}(t)=-I(t)dt+\tilde z^{\varepsilon}(t)dW_t\]
with
\begin{equation*}
\begin{split}
I(t):=&f(t,x^{\varepsilon}(t),y^{\varepsilon}(t),z^{\varepsilon}(t),u^{\varepsilon}(t))-\bar f(t)-\bar f_x(t)x_1(t)-\bar f_y(t)(p^*(t)x_1(t)+y_1(t))\\
&-\bar f_z(t)\big[p^*(t)\bar \sigma_x(t)x_1(t)+q^*(t)x_1(t)+z_1(t)\big]-\delta f(t;u^{\varepsilon}_t).
\end{split}
\end{equation*}
Thus, we see that
\begin{equation*}
\begin{split}
&f(t,x^{\varepsilon}(t),y^{\varepsilon}(t),z^{\varepsilon}(t),u^{\varepsilon}(t))-\bar f(t)-\delta f(t;u^{\varepsilon}(t))\\
=&f(t,x^{\varepsilon}(t),y^{\varepsilon}(t),z^{\varepsilon}(t),u^{\varepsilon}(t))-f(t,\bar x(t),\bar y(t),\bar z(t),u^{\varepsilon}(t))\\
=&\bar f_x(t)(x^{\varepsilon}(t)-\bar x(t))+\bar f_y(t)(y^{\varepsilon}(t)-\bar y(t))+\bar f_z(t)(z^{\varepsilon}(t)-\bar z(t))+i(t),
\end{split}
\end{equation*}
where the residual term $i(t)$ satisfies
\[\mathbb E\bigg[\bigg(\int_0^T |i(t)|dt\bigg)^4\bigg]=o(\varepsilon^4).\]
Hence
\[I(t)=\bar f_x(t)(x^{\varepsilon}(t)-\bar x(t)-x_1(t))+\bar f_y(t)\tilde y^{\varepsilon}(t)+\bar f_z(t) \tilde z^{\varepsilon}(t)+i(t).\]
The starndard estimate of BSDEs yields that
\[\mathbb E\bigg[\sup_{0 \le t \le T}|\tilde y^{\varepsilon}(t)|^4\bigg]\le C\mathbb E\bigg[\bigg(\int_0^T|x^{\varepsilon}(t)-\bar x(t)-x_1(t)|+|i(t)|dt\bigg)^4\bigg]=o(\varepsilon^4).\]
\end{proof}
 To derive the second order condition in the next section, we also need to  expand the value function to the second order. Let $(y_2,z_2)$ be the solution of the following:
\begin{equation}
\label{BSDE second order}
\left \{
\begin{split}
&dy_2(t)=-\bigg\{\bar f_y(t)y_2(t)+\bar f_z(t)z_2(t)+\left<P(t)\delta b(t;u^{\varepsilon}_t),x_1(t)\right>+p^*(t)\delta b_x(t;u^{\varepsilon}_t)x_1(t)\\
&\qquad \qquad  +\bigg[\delta f_x(t;u^{\varepsilon}(t))+\delta f_y(t;u^{\varepsilon}(t))p^*(t)+\delta f_z(t;u^{\varepsilon}(t))(p^*(t)\bar \sigma_x(t)+q^*(t))\bigg]x_1(t)\bigg\}dt+z_2(t)dW_t,\\
&y_2(T)=0.
\end{split}
\right .
\end{equation}
 We now establish the following lemma.

\begin{lemma}
Assume that Assumption 1 and Assumption 2 are satisfied. Let $\bar u(\cdot)$ be a optimal control singular on the control region $V$ and $u(\cdot)$ any $V$-valued admissible control. For any $r>1$, we have
\begin{eqnarray}
\begin{split}
  \mathbb E\bigg[\sup_{0 \le t \le T}|y^{\varepsilon}(t)-\bar y(t)-p^*(t)(x_1(t)+x_2(t))
-\frac{1}{2}P(t)(x_1(t))^2
-y_2(t)|^2\bigg]=o(\varepsilon^4).
\end{split}
\end{eqnarray}
\end{lemma}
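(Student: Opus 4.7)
\medskip

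\textbf{Proof proposal.} The plan is to follow exactly the strategy of the preceding first-order expansion lemma, but carried out to one higher order. Set
\[
\tilde y^{\varepsilon}(t):=y^{\varepsilon}(t)-\bar y(t)-p^{*}(t)\bigl(x_1(t)+x_2(t)\bigr)-\tfrac{1}{2}P(t)(x_1(t))^{2}-y_2(t),
\]
and define $\tilde z^{\varepsilon}(t)$ to be the corresponding difference obtained by reading off the martingale part of $d\tilde y^{\varepsilon}(t)$ after all the It\^o calculations below. I would then apply It\^o's formula to $p^{*}(t)x_1(t)$, $p^{*}(t)x_2(t)$ and $\tfrac{1}{2}P(t)(x_1(t))^{2}$ using the adjoint BSDEs \eqref{eq:3.1}--\eqref{eq:3.2} and the variation SDEs \eqref{first oder SDE}--\eqref{second order SDE}, and combine the result with the BSDE \eqref{BSDE second order} for $y_2$. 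This produces an expression of the form $d\tilde y^{\varepsilon}(t)=-I(t)\,dt+\tilde z^{\varepsilon}(t)\,dW_t$ with $\tilde y^{\varepsilon}(T)=0$, where $I(t)$ collects (i) all the drift contributions coming from $p,\,P,\,x_1,\,x_2,\,y_2$, and (ii) the drift of $y^{\varepsilon}$ itself, namely $-f(t,x^{\varepsilon},y^{\varepsilon},z^{\varepsilon},u^{\varepsilon})$.

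The key algebraic step is then to Taylor-expand
\[
f(t,x^{\varepsilon}(t),y^{\varepsilon}(t),z^{\varepsilon}(t),u^{\varepsilon}(t))
\]
to second order around $(\bar x(t),\bar y(t),\bar z(t),u^{\varepsilon}(t))$, replacing the increments $x^{\varepsilon}-\bar x$, $y^{\varepsilon}-\bar y$, $z^{\varepsilon}-\bar z$ by $x_1+x_2$, $p^{*}x_1+y_1+(\text{higher order})$, $p^{*}\bar\sigma_x x_1+q^{*}x_1+z_1+(\text{higher order})$ in view of Lemmas~\ref{lemma estiamtion SDE}--\ref{lemma first order}. The terms quadratic in $(x_1,y_1,z_1)$ together with $f_x x_2$ and the $\delta b$-type spike contributions must then be matched against the drift of $\tfrac{1}{2}P(x_1)^{2}$ (which produces $\bar\sigma_x^{*}P\bar\sigma_x(x_1)^{2}$ from the quadratic variation and the terms $p^{*}\bar b_{xx}(x_1)^{2}$, $(\bar f_z p^{*}+q^{*})\bar\sigma_{xx}(x_1)^{2}$, the Hessian contributions $[I,p,\bar\sigma_x^{*}p+q]D^{2}\bar f[\cdot]^{T}(x_1)^{2}$, etc., written into the definition of \eqref{eq:3.2}) and against the drift of $y_2$ defined in \eqref{BSDE second order}. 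The second-order adjoint equation has been engineered so that all of these deterministic quadratic cancellations occur exactly; what survives is
\[
I(t)=\bar f_y(t)\,\tilde y^{\varepsilon}(t)+\bar f_z(t)\,\tilde z^{\varepsilon}(t)+\bar f_x(t)\bigl[x^{\varepsilon}(t)-\bar x(t)-x_1(t)-x_2(t)\bigr]+R^{\varepsilon}(t),
\]
where $R^{\varepsilon}(t)$ is a genuine Taylor remainder in $(x,y,z)$ and, crucially, also contains the terms $p^{*}\delta b(t;u^{\varepsilon})+\delta f(t;u^{\varepsilon})$ arising because in the first-order expansion we have written $y_1\equiv 0$. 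The singularity hypothesis enters at precisely this point: since $u^{\varepsilon}$ takes values in $V$ on $E_{\varepsilon}$ and equals $\bar u$ outside, and $\sigma$ does not depend on $u$, the identity $\delta H(t;v)=p^{*}(t)\delta b(t;v)+\delta f(t;v)=0$ for all $v\in V$ forces these spike contributions to vanish identically; similarly the cross term $\langle P\delta b,x_1\rangle+\delta G(t;u^{\varepsilon})x_1$ appearing inside $\bar f\text{-Hessian}\cdot x_1^{2}$ / $f_x x_2$ absorbs into the drift of $y_2$.

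The remaining steps are quantitative. Using Assumptions~\ref{ass:1.1}--\ref{ass:1.2}, the uniform boundedness of $D^{2}\bar f$, $\bar b_{xx}$, $\bar\sigma_{xx}$, the standard moment bounds on $p,q,P,Q$ in $S^{8}\times L^{8}$, and the estimates of Lemma~\ref{lemma estiamtion SDE} on $x^{\varepsilon}-\bar x$, $x_1$, $x^{\varepsilon}-\bar x-x_1$, $x_2$, $x^{\varepsilon}-\bar x-x_1-x_2$, one verifies by H\"older that
\[
\mathbb E\Bigl[\Bigl(\int_0^{T}|R^{\varepsilon}(t)|\,dt\Bigr)^{2}\Bigr]=o(\varepsilon^{4}),\qquad \mathbb E\Bigl[\Bigl(\int_0^{T}|\bar f_x(t)(x^{\varepsilon}-\bar x-x_1-x_2)(t)|\,dt\Bigr)^{2}\Bigr]=o(\varepsilon^{4}).
\]
Applying Lemma~\ref{lemma standard esimate BSDE} to the linear BSDE satisfied by $(\tilde y^{\varepsilon},\tilde z^{\varepsilon})$ with zero terminal condition and Lipschitz coefficients $\bar f_y,\bar f_z$ then yields
\[
\mathbb E\Bigl[\sup_{0\le t\le T}|\tilde y^{\varepsilon}(t)|^{2}\Bigr]=o(\varepsilon^{4}),
\]
which is the claim. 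The genuinely delicate part of this program is the It\^o bookkeeping in the second paragraph: every second-order term produced by the quadratic-variation correction to $\tfrac12 P(x_1)^2$ and by the Taylor expansion of $f$ must be recognized in the drift of the second-order adjoint \eqref{eq:3.2} or of $y_2$; showing that no uncancelled $O(\varepsilon^{2})$-in-$L^{2}$-norm term is left behind (which would only give $O(\varepsilon^{4})$, not $o(\varepsilon^{4})$) is where the singularity assumption and the precise form of $G$ in Theorem~\ref{thm:2.2} are indispensable.
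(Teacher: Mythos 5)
Your proposal is correct and follows essentially the same route as the paper: use the singularity of $\bar u$ on $V$ to get $y_1\equiv 0$ (hence the refined first-order estimates), apply It\^o's formula to $p^*(t)(x_1(t)+x_2(t))$ and $\tfrac12 P(t)(x_1(t))^2$ via the adjoint equations \eqref{eq:3.1}--\eqref{eq:3.2}, Taylor-expand $f$ to second order so that the quadratic terms cancel against the drift of the second-order adjoint and of $y_2$, and conclude with the standard BSDE estimate applied to the resulting linear BSDE with an $o(\varepsilon^4)$ residual. The bookkeeping you flag as delicate is exactly what the paper's displayed computations \eqref{Ito px_1x_2}, \eqref{Ito Px_1x_1} and \eqref{Taylor expand f second order 2} carry out.
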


\begin{proof}
 Note that for any $V$-valued admissible control $u$, the corresponding process $y_1$ satisfies $y_1(t) \equiv 0$. Hence, from Lemma \ref{lemma first order}, we have
\begin{eqnarray}
\mathbb E\bigg[\sup_{0 \le t \le T}|y^{\varepsilon}(t)-\bar y(t)-p^*(t)x_1(t)|^4\bigg]=o(\varepsilon^4)
\end{eqnarray}
and

\begin{eqnarray}
\mathbb E\bigg[\bigg(\int_0^T|z^\varepsilon(t)-\bar z(t)-(p^*(t)\bar\sigma_x(t)+q^*(t))x_1(t)|^2dt\bigg)^2
\bigg]=o(\varepsilon^4).
\end{eqnarray}

Applying It\^o's formula, we have
\begin{equation}
\label{Ito px_1x_2}
\begin{split}
&d\bigg[p^*(t)(x_1(t)+x_2(t))\bigg]\\
=&\bigg\{p^*(t)\delta b(t;u^{\varepsilon}(t))+p^*(t)\delta b_x(t)x_1(t)-\bar f_y(t)\bigg[p^*(t)(x_1(t)+x_2(t))\bigg]\\
&-\bar f_z(t)\bigg[p^*(t)\bar \sigma_x(t)(x_1(t)+x_2(t))+q^*(t)\bigg]-\bar f_x(t)\bigg[x_1(t)+x_2(t)\bigg]\\
&+\frac{1}{2}\bigg[p^*(t) \bar b_{xx}(t)+q^*(t)\bar\sigma_{xx}(t)\bigg](x_1(t))^2\bigg\}dt+\bigg\{q^*(t)\bigg[x_1(t)+x_2(t)\bigg]\\
&+p^*(t)\bigg[\bar \sigma_x(t)(x_1(t)+x_2(t))+\frac{1}{2}\bar \sigma_{xx}(t)x_1(t)\otimes x_1(t)\bigg]\bigg\}dW_t
\end{split}
\end{equation}
and
\begin{equation}
\label{Ito Px_1x_1}
\begin{split}
&d\bigg[\frac{1}{2}P(t)(x_1(t))^2\bigg]\\
=&\bigg\{\left <P(t)\delta b(t;u^{\varepsilon}(t)),x_1(t)\right>-\frac{1}{2}\bar f_y(t)P(t)(x_1(t))^2-\frac{1}{2}\bar f_z(t)\bigg[\bar \sigma^*_x(t)P(t)+P(t)\sigma_x(t)+Q(t)\bigg](x_1(t))^2\\
& -\frac{1}{2}\bigg[p^*(t)\bar b_{xx}(t)+\big[\bar f_z(t)p(t)+q(t)\big]^*\bar \sigma_{xx}(t)+[I,p(t),\bar \sigma_x(t)p(t)\\
& +q(t)]D^2 \bar f(t)[I,p(t),\bar \sigma_x(t)p(t)+q(t)]^T\bigg]
(x_1(t))^2\bigg\}dt
+\bigg\{\frac{1}{2}\bar f_z(t)\bigg[\bar \sigma^*_x(t)P(t)+P(t)\sigma_x(t)+Q(t)\bigg](x_1(t))^2\bigg\}dW_t.
\end{split}
\end{equation}
Define
\[\hat y^{\varepsilon}(t):=y^{\varepsilon}(t)-\bar y(t)-p(t)(x_1(t)+x_2(t))-\frac{1}{2}P(t)
(x_1(t))^2-y_2(t)\]
and
\begin{equation*}
\begin{split}
\hat z^{\varepsilon}(t):=&z^{\varepsilon}(t)-\bar z(t)-\bigg\{q^*(t)\bigg[x_1(t)+x_2(t)\bigg]
+p^*(t)\bigg[\bar \sigma_x(t)(x_1(t)+x_2(t))+\frac{1}{2}\bar \sigma_{xx}(t)x_1(t)\otimes x_1(t)\bigg]\bigg\}\\
&\qquad \qquad-\frac{1}{2}\bar f_z(t)\bigg[\bar \sigma^*_x(t)P(t)+P(t)\sigma_x(t)+Q(t)\bigg](x_1(t))^2.
\end{split}
\end{equation*}
Moreover, using Taylor expansion of $f$, we have
\begin{equation}
\label{Taylor expand f second order 1}
\begin{split}
&f(t,x^{\varepsilon}(t),y^{\varepsilon}(t),z^{\varepsilon}(t),u^{\varepsilon}(t))-f(t,\bar x(t),\bar y(t),\bar z(t),u^{\varepsilon}(t))\\
=&f_x(t;u^{\varepsilon}(t))(x^{\varepsilon}(t)-\bar x(t))+f_y(t;u^{\varepsilon}(t))(y^{\varepsilon}(t)-\bar y(t))+f_z(t;u^{\varepsilon}(t))(z^{\varepsilon}(t)-\bar z(t))\\
&+\frac{1}{2}[x^{\varepsilon}(t)-\bar x(t),y^{\varepsilon}(t)-\bar y(t),z^{\varepsilon}(t)-\bar z(t)]D^2f(t;u^{\varepsilon}(t))
[x^{\varepsilon}(t)-\bar x(t),y^{\varepsilon}(t)-\bar y(t),z^{\varepsilon}(t)-\bar z(t)]^T\\
&+i_1(t),
\end{split}
\end{equation}
where $i_1(t)$ is the residual term of Taylor expansion, one can easily obtain that
\[\mathbb E\bigg[\bigg(\int_0^T|i_1(t)|dt\bigg)^2\bigg]=o(\varepsilon^4)\].
Also, we see that
\[f_x(t;u^{\varepsilon}(t))(x^{\varepsilon}(t)-\bar x(t))=\bar f_x(t)(x^{\varepsilon}(t)-\bar x(t))+\delta f_x(t;u^{\varepsilon}(t))x_1(t)+i_2(t),\]
where $i_2(t)$ also satisfies
\[\mathbb E\bigg[\bigg(\int_0^T|i_2(t)|dt\bigg)^2\bigg]=o(\varepsilon^4).\]
We can get similar approaximations for the terms of $y$ and $z$ and the quadratic term. Thus, finally we rewrite (\ref{Taylor expand f second order 1}) as
\begin{equation}
\label{Taylor expand f second order 2}
\begin{split}
&f(t,x^{\varepsilon}(t),y^{\varepsilon}(t),z^{\varepsilon}(t),u^{\varepsilon}(t))-f(t,\bar x(t),\bar y(t),\bar z(t),u^{\varepsilon}(t))\\
=&\bar f_x(t)(x^{\varepsilon}(t)-\bar x(t))+\bar f_y(t)(y^{\varepsilon}(t)-\bar y(t))+\bar f_z(t)(z^{\varepsilon}(t)-\bar z(t))+\delta f_x(t;u^{\varepsilon}(t))x_1(t)\\
&+\delta f_y(t;u^{\varepsilon}(t))p^*(t)x_1(t)+\delta f_z(t;u^{\varepsilon}(t))(p^*(t)\bar\sigma_x(t)+q^*(t))x_1(t)\\
&+\frac{1}{2}[I,p^*(t),p(t)\bar\sigma_x(t)+q(t)]D^2\bar f(t)[I,p(t),p(t)\bar\sigma_x(t)+q(t)]^T(x^1(t))^2\\
&+i_3(t),
\end{split}
\end{equation}
with $i_3(t)$ satisfying
\[E\bigg[\bigg(\int_0^T|i_3(t)|dt\bigg)^2
\bigg]=o(\varepsilon^4).\]
Combining (\ref{Ito px_1x_2}), (\ref{Ito Px_1x_1}) and (\ref{Taylor expand f second order 2}) we obtain that
\[d\hat y^{\varepsilon}(t)=-\{\bar f_y(t)\hat y^{\varepsilon}(t)+\bar f_z(t)\hat z^{\varepsilon}(t)+i(t)\}dt+\hat z^{\varepsilon}(t)dW_t,\]
with the residual term $i(t)$ satisfying
\[\mathbb E\bigg[\bigg(\int_0^T|i(t)|dt\bigg)^2
\bigg]=o(\varepsilon^4).\]
\end{proof}

\section{Proof for the Main Results}
\subsection{First Order Condition}

The solution of the linear BSDE (\ref{BSDE First Order}) can be represented via the adjoint SDE. Let $\gamma(t)$ satisfy:
\begin{equation}
\label{adjoint linear SDE}
\left \{
\begin{split}&d\gamma(t)=\bar f_y(t)\gamma(t)dt+\bar f_z(t)\gamma(t)dW_t,\\
&\gamma(0)=1.
\end{split}
\right .
\end{equation}
Applying It\^o's formula to $\gamma(t)y_1(t)$, we shall have
\[y_1(0)=\mathbb E\bigg[\int_0^T\big \{ \gamma(t)(p(t)\delta b(t;u_t)+\delta f(t:u_t))1_{E^{\varepsilon}}(t)\big\}dt\bigg].\]
Choosing $E^{\varepsilon}$ carefully such that $|E^{\varepsilon}|=\varepsilon$ and
\[\mathbb E\bigg[\int_0^T\big \{ \gamma(t)(p(t)\delta b(t;u_t)+\delta f(t:u_t))1_{E^{\varepsilon}}(t)\big\}dt\bigg]
=\varepsilon \mathbb E\bigg[\int_0^T\big \{ \gamma(t)(p(t)\delta b(t;u_t)+\delta f(t:u_t))\big\}dt\bigg].\]
We have
\[J(u^\varepsilon)=y^{\varepsilon}(0)=\bar y(0)+\varepsilon \mathbb E\bigg[\int_0^T\big \{ \gamma(t)(p(t)\delta b(t;u_t)+\delta f(t:u_t))\big\}dt\bigg]+o(\varepsilon).\]
Since $\bar y(0)$ is optimal, we shall have
\[\limsup_{\varepsilon}
\frac{y^{\varepsilon}(0)-\bar y(0)}{\varepsilon} \ge 0,\]
which implies that
\[\mathbb E\bigg[\int_0^T\big \{ \gamma(t)(p(t)\delta b(t;u_t)+\delta f(t:u_t))\big\}dt\bigg] \ge 0,\]
for any $u \in U_{ad}$.
Finally, due to the abitrariness of $u(\cdot)$, we see that \eqref{eq:3.5}
holds. Thus the proof of  Theorem
\ref{thm:1.1} is completed.
\subsection{Second Order Condition}
In this subsection, we are going to prove Theorem \ref{thm:2.2}. Denote by
\[G(t;u):=H_x(t;u)+\bar f_y(t;u)p^*(t)+\bar f_z(t;u)(p^*(t)\bar \sigma_x(t)+q^*(t))\].
Similarly, one can deduce that
\begin{equation}
\label{Seconder Order condition 1}
\mathbb E\bigg[\int_{t_1}^{t_2}\gamma(t)\big \{\delta G(t,v(t))+\delta b^*(t,v(t))P(t)\big \}x_1(t;v(\cdot)) dt\bigg] \ge 0,
\end{equation}
for any $v \in V_{ad}(t_1,t_2)$. Here
\[V_{ad}(t_1,t_2):=\{v \in U_{ad}| v(t) \in V, \text{a.s., a.e. }t \in [t_1,t_2];v(t)=\bar u(t),t\in[0,1]/[t_1,t_2] \}.\]
Note that one can solve (\ref{first oder SDE}) explicitly:
\[x_1(t)=\int_{t_1}^t\Phi(s;t) \delta b(t;v(t))ds,\]
where $\Phi(\cdot;t)$ satisfies
\[d\Phi(s;t)=-\{\Phi(s;t)\bar b_x(s)+\Psi(s;t)\bar \sigma_x(t)\}ds+\Psi(s;t)dW_s,\Phi(t;t)=I.\]
Moreover, for any $t$, $\Phi(s;t)$ is continuous in $s$ almost surely. Thus, we can rewrite the left hand side of (\ref{Seconder Order condition 1}) asb
\[\mathbb E\bigg[\int_{t_1}^{t_2}\int_{t_1}^t\gamma(t)\big\{\delta G(t,v(t))+\delta b^*(t,v(t))P(t)\big\}\Phi(s;t)\delta b(s;v(s))dsdt\bigg].\]
Denote by $\{r_i\}_{i=1}^{\infty}$ the totality of rarional numbers in $[0,1]$, and by $\{u_i\}_{i=1}^{\infty}$ a dense subset of $V$. Since $\mathcal F_t$ is countable generated for $t \in [0,1]$, we can assume that $\{A_{ij}\}_{j=1}^{\infty}$ generates $\mathcal F_{r_i},i=1,2,3,...$. Set
\[Z_{ij}^v(t):=\bar u(t)\chi_{A^c_{ij}}(\omega)\chi_{[0,r_i)}(t)+v\chi_{A_{ij}}(\omega)\chi_{[r_i,1)}(t),\]
for $t\in [0,1],v \in V,i,j=1,2,....$ For each triplet $(i,j,k)$, since
\[\mathbb E\bigg[\gamma(t)(\delta G(t,Z_{ij}^{u_k}(t))+\delta b^*(t,Z_{ij}^{u_k}(t))P(t))\delta b(t,Z_{ij}^k(t))\bigg]\]
is Lebesgue integrable, there is a null subset $T_{ij}^k \subset [0,1]$ such that for $t \in [0,1]/T_{ij}^k$,
\begin{equation*}
\begin{split}
&\lim_{r\rightarrow0+}
\frac{1}{r}\int_{t-r\beta}^{t+r\alpha}
\mathbb E[\gamma(s)(\delta G(s,Z_{ij}^{u_k}(s))+\delta b^*(s,Z_{ij}^{u_k})P(s))\delta b(t,Z_{ij}^{u_s})(s)]ds\\
=&(\alpha+\beta)\mathbb E\bigg[\gamma(t)(\delta G(t,Z_{ij}^{u_k}(t))+\delta b^*(t,Z_{ij}^{u_k})P(t))\delta b(t,Z_{ij}^{u_k})(t)
\bigg]
\end{split}
\end{equation*}
and
\[\lim_{r \rightarrow 0+}\frac{1}{r}\int_{t-r\beta}^{t+r\alpha}
\mathbb E\bigg[(\delta b(s;Z_{ij}^{u_k}(s))-\delta b(t;Z_{ij}^{u_k}(t)))^2\bigg]ds=0.\]
Set
\[T_0:=\cup_{1\le i,j,k \le \infty}T_{ij}^k.\]
Then $T_0$ is a null subset of $[0,1]$. For $t \in [0,1]/T_0 $ and the integers $i$ such that $r_i < t$, consider the perturbed control $v$ as $v(s)=u(s)\chi_{[0,1]/[t-r\beta,t+r\alpha]}(s)+Z_{ij}^k(s)\chi_{[t-r\beta,t+r\alpha]}(s)$. We have
\[\frac{1}{r}\mathbb E\bigg[\int_{t-r\beta}^{t+r\alpha}\int_{t-r\beta}^u\gamma(u)\big\{\delta G(u,Z_{ij}^k(u))+\delta b^*(t,Z_{ij}^k(u))P(u)\big\}\Phi(s;u)\delta b(s;Z_{ij}^k(s))dsdu\bigg]\ge 0.\]
Letting $r$ tend to $0$, we finally get that
\[\mathbb E\bigg[\gamma(t)(\delta G(t;u_k)+\delta b^*(t;u_k)P(t))\delta b(t;u_k)\chi_{A_ij}\bigg]\ge 0.\]
Since $A_{ij}$ generates $\mathcal F_{r_i}$, we have
\[\mathbb E\bigg[\gamma(t)(\delta G(t;u_k)+P(t)\delta b(t;u_k))\delta b(t;u_k)|\mathcal F_{r_i}\bigg] \ge 0,a.s..\]
Since the filtration is generated by the Brownian motion, $\mathcal F_t$ is quasi-left-continuous which implies that all martingales are continuous. Then it holds that
\[\gamma(t)(\delta G(t;u_k)+\delta b^*(t;u_k)P(t))\delta b(t;u_k) \ge 0,a.s..\]
Since $\gamma(t)$ is positive, it is equivalent to
\[(\delta G(t;u_k)+\delta b^*(t;u_k)P(t))\delta b(t;u_k) \ge 0, a.s.\]
From the continuity of the coefficients and the density of $\{u_k\}_{k=1}^{\infty}$, we have
\[(\delta G(t;u)+\delta b^*(t;u)P(t))\delta b(t;u) \ge 0, \forall u \in V, a.s..\]
holds. Therefore we finish the proof of Theorem \ref{thm:2.2}.
\section{Examples}
In this section, we give two examples to illustrate the applications of our second-order maximum principle.
Example 1. The state process of the controlled system is
\begin{equation}
\label{example control system}
\left \{
\begin{split}
&dx(t)=\bigg(\begin{matrix}
-\frac{1}{2}a^2& -u\\
u & -\frac{1}{2}a^2
\end{matrix}\bigg)x(t)dt+\bigg (\begin{matrix}
0 & -a\\
a & 0
\end{matrix}\bigg)x(t)dW(t), 0<t<1,\\
&x(0)=\bigg(\begin{matrix}1\\0\end{matrix}\bigg)
\end{split}
\right .
\end{equation}
with the cost process
\begin{equation}
\label{example cost process}
\left \{
\begin{split}
&dy(t)=-\{\beta y(t)+\gamma z(t)\}dt+z(t)dW(t),\\
&y(T)=\frac{1}{2}|x(T)|^2,
\end{split}
\right .
\end{equation}
where the valued set $U$ of admissible controls is:
\[U=[-1,1].\]
and $a,\beta,\gamma$ are deterministic. For each constant control $u$, equation (\ref{example control system}) can be solved explicitly as
\begin{equation}
\label{example solution}
x(t;u)=\bigg(\begin{matrix}
\cos(ut+aW(t))\\\sin(ut+aW(t))
\end{matrix}\bigg).
\end{equation}
One can check that any admissible control $u(\cdot)$ is optimal in this example. For the admissible reference pair $(x(\cdot),u)$ with $u \in \cal U_{ad}$ being constant, the associated first-order adjoint equation $(p(\cdot;u),q(\cdot;u))$ satisfying the following BSDE:
\begin{equation}
\label{example first order adjoint}
\left \{
\begin{split}
&dp(t)=-\bigg[\bigg(\begin{matrix}
\beta-\frac{1}{2}a^2& \gamma a+u\\
-\gamma a-u &\beta-\frac{1}{2}a^2
\end{matrix}
\bigg)p(t)+\bigg(\begin{matrix}
\gamma & a\\
 -a & \gamma
\end{matrix}
\bigg)q(t)\bigg]dt+q(t)dW_t,\\
&p(1)=x(1).
\end{split}
\right.
\end{equation}
It is solved as
\begin{equation*}
\left \{
\begin{split}
&p(t;u)=\exp(\beta(T-t))\bigg(
\begin{matrix}
\cos(ut+aW_t)\\sin(ut+aW_t)
\end{matrix}
\bigg)\\
&q(t;u)=\exp(\beta(T-t))\bigg(
\begin{matrix}
-a\sin(ut+aW_t)\\a\cos(ut+aW_t)
\end{matrix}
\bigg).
\end{split}
\right .
\end{equation*}
Thus the Hamiltonian can be calculated which shows that  $H(t,x(t;u),y(t;u),z(t;u),v,p(t;u),q(t;u))$ is independent of $v$. Hence any constant control $u$ is singalar on $U$. Consider the second order adjoint equation:
\begin{equation}
\left \{
\begin{split}
&dP(t)=-[\beta P(t)+(f_x+\gamma\sigma_x)^*P(t)+P(t)(f_x+\gamma\sigma_x)+\sigma_x^*P(t)\sigma_x+\gamma Q(t)+\sigma_x^*Q(t)+Q(t)\sigma_x]dt+Q(t)dW_t,\\
&P(1)=I
\end{split}
\right .
\end{equation}
with
\[f_x=\bigg(\begin{matrix}
-\frac{1}{2}a^2& -u\\ u & -\frac{1}{2}a^2
\end{matrix}\bigg),\sigma_x=\bigg(\begin{matrix}
0& -a\\ a & 0
\end{matrix}\bigg).\]
Obviously, $P(t)=\exp(\beta(T-t))I,Q(t)\equiv 0$. Then we have
\[\delta G(t;v)=-\exp(\beta(T-t))(v-u)^2,\delta b^*(t;v)P(t)\delta b(t;v)=\exp(\beta(T-t))(v-u)^2.\]
It implies that any constant control $u$ satisfies our second-order maximum principle. This show that the second term in can not be crossed out in (\ref{second order condition}).\\

Example 2. The control system is
\begin{equation}
\left \{
\begin{split}
&dx(t)=u(t)dt+(x-1)dW_t,u_t\in U:=\{-1,0,1\}\\
&x(0)=1
\end{split}
\right .
\end{equation}
and the cost process is defined as
\begin{equation*}
\left \{
\begin{split}
&dy(t)=-f(y(t),z(t))dt+z(t)dW(t),\\
&y(1)=\pm\frac{1}{2}(x(1)-1)^2,
\end{split}
\right .
\end{equation*}
with $f$ be any deterministic function. For both cost functionals, the constant control $u \equiv 0$ is singular on $U$ since the corrsponding first-order adjoint processes are identically zero. The second adjoint processes are $(P(t),Q(t)\equiv 0)$ with $P(t)$ solves the following ODE:
\[dP(t)=-[\bar f_y(t)+2\bar f_z(t)+1]P(t)dt,P(1)=\pm \frac{1}{2}.\]
From Thoerem \ref{thm:2.2}, we see that $u \equiv 0$ is a candidate for optimal controls at the case $y(1)=\frac{1}{2}(x(1)-1)^2$, and necessarily not an optimal control at the other case.


\begin{thebibliography}{99}
\bibitem{Be75}
Bell, D. J., \& Jacobson, D. H. (1975). Singular optimal control problems (Vol. 117). Elsevier.



\bibitem{Be}
Bensoussan, A. (1982). Lectures on stochastic control. In Nonlinear filtering and stochastic control (pp. 1-62). Springer, Berlin, Heidelberg.

\bibitem{Bis}
Bismut, J. M. (1978). An introductory approach to duality in optimal stochastic control. {\it SIAM review,} 20(1), 62-78.

\bibitem{Bo12}
Bonnans, J. F., \& Silva, F. J. (2012). First and second order necessary conditions for stochastic optimal control problems. {\it Applied Mathematics \& Optimization,} 65(3), 403-439.


\bibitem{Do99}
Dokuchaev, N., \& Zhou, X. Y. (1999). Stochastic controls with terminal contingent conditions.
{\it Journal of Mathematical Analysis and Applications,} 238(1), 143-165.


\bibitem{Fr17}
Frankowska, H., Zhang, H., \& Zhang, X. (2017). First and second order necessary conditions for stochastic optimal controls.
{\it Journal of Differential Equations,} 262(6), 3689-3736.


\bibitem{GK}
Gabasov, R., \& Kirillova, F. M. (1972). High order necessary conditions for optimality. {\it SIAM Journal on Control,} 10(1), 127-168

\bibitem{Gi93}
Gift, S. J. G. (1993). Second-order optimality principle for singular optimal control problems.
{\it Journal of optimization theory and applications,} 76(3), 477-484.


\bibitem{Ha}
Haussmann, U. G. (1976). General necessary conditions for optimal control of stochastic systems. In Stochastic Systems: Modeling, Identification and Optimization, II (pp. 30-48). Springer, Berlin, Heidelberg.




\bibitem{Hu17}
Hu, M. (2017). Stochastic global maximum principle for optimization with recursive utilities.
{\it Probability, Uncertainty and Quantitative Risk,} 2(1), 1.




\bibitem{Ji06}
Ji, S., \& Zhou, X. Y. (2006). A maximum principle for stochastic optimal control with terminal state constraints, and its applications.
{\it Communications in Information \& Systems,} 6(4), 321-338.

\bibitem{Ka84}
Kazemi-Dehkordi, M. A. (1984). Necessary conditions for optimality of singular controls.
{\it Journal of optimization theory and applications,} 43(4), 629-637.

\bibitem{Kr77}
Krener, A. J. (1977). The high order maximal principle and its application to singular extremals.
 {\it SIAM Journal on Control and Optimization,} 15(2), 256-293.

\bibitem{Ku}
Kushner, H. J. (1972). Necessary conditions for continuous parameter stochastic optimization problems.
{\it SIAM Journal on Control,} 10(3), 550-565.
\bibitem{Lou}
Lou, H. \& Yong, J.(2017) Second-Order Necessary Conditions for Optimal Control of Semilinear Elliptic Equations with Leading Term Containing Controls. {\it arXiv:1703.08649}
	

\bibitem{Pe}Peng, S. (1990). A general stochastic maximum principle for optimal control problems.
     {\it SIAM Journal on control and
     optimization,} 28(4), 966-979.

\bibitem{Pe93}Peng, S. (1993). Backward stochastic differential equations and applications to optimal control. {\it Applied Mathematics and
     Optimization,} 27(2), 125-144.

\bibitem{Pe98}
Peng, S.(1998). Open problems on backward stochastic differential equations. In: Chen, S, Li, X, Yong, J, Zhou,
XY (eds.) Control of distributed parameter and stocastic systems, pp. 265¨C273, Boston: Kluwer Acad.
Pub.


\bibitem{Sh06}Shi, J., \& Wu, Z. (2006). The Maximum I Principle for Fully Coupled Forward-backward Stochastic Control System.
     {\it Acta Automatica Sinica,} 32(2), 161.
    \bibitem{Ta10}
   Tang, S. (2010). A second-order maximum principle for singular optimal stochastic controls.
    {\it Discrete Contin. Dyn. Syst. Ser. B,} 14, 1581-1599.


  \bibitem {Wu98}
    Wu, Z.(1998). Maximum principle for optimal control problem of fully coupled forward-backward stochastic
systems. {\it Syst. Sci. Math. Sci.} 11, 249¨C
  \bibitem{Wu13}
  Wu, Z. (2013). A general maximum principle for optimal control of forward¨Cbackward stochastic systems.
   {\it Automatica,} 49(5), 1473-1480.

\bibitem{Xu95}
Xu, W. (1995). Stochastic maximum principle for optimal control problem of forward and backward system. The ANZIAM Journal, 37(2), 172-185.
\bibitem{Yo10}
Yong, J. (2010). Optimality variational principle for controlled forward-backward stochastic differential equations with mixed initial-terminal conditions.
{\it SIAM Journal on Control and Optimization,} 48(6), 4119-4156.

\bibitem{Zh15}
Zhang, H., \& Zhang, X. (2015). Pointwise second-order necessary conditions for stochastic optimal controls, Part I: The case of convex control constraint.
{\it SIAM Journal on Control and Optimization,} 53(4), 2267-2296.

\bibitem{Zh17}
Zhang, H., \& Zhang, X. (2017). Pointwise second-order necessary conditions for stochastic optimal controls, Part II: The general case.
{\it SIAM Journal on Control and Optimization,} 55(5), 2841-2875.

\bibitem{Zh96}
Zhou, Q. (1996). Second-order optimality principle for singular optimal control problems.
{\it Journal of optimization theory and applications,} 88(1), 247-249.


\end{thebibliography}
\end{document}